\documentclass[11pt]{amsart}
\usepackage{amsmath,amsfonts,amssymb,amsthm,amscd}
\usepackage{amsfonts,amsmath,amssymb}
\usepackage{enumerate}
 \usepackage{xcolor}
\usepackage{bm}
\usepackage{tikz}
\usepackage{hyperref}
\usepackage{stmaryrd}

\newcommand{\R}{\mathbb{R}}

\newcommand{\eps}{\epsilon}

\newcommand{\Ric}{\mathrm{Ric}}
\renewcommand{\CD}{\mathrm{CD}}
\newcommand{\LSI}{\mathrm{LSI}}
\newcommand{\var}{\mathrm{Var}}

\newcommand{\abs}[1]{\left | #1\right |}
\newcommand{\norm}[1]{\left\Vert #1 \right\Vert}
\newcommand{\Hess}{\mathrm{Hess}}
\newcommand{\mani}{\mathcal{M}}
\newcommand{\rr}{\mathbb{R}}
\newcommand{\iprod}[1]{\left\langle #1 \right\rangle}
\newcommand{\wass}{\mathbb{W}}
\newcommand{\KL}{\mathrm{KL}}
\newcommand{\vol}{\mathrm{vol}}

\newcommand{\E}{\mathrm{E}}
\newcommand{\Cov}{\mathrm{Cov}}
\newcommand{\Var}{\mathrm{Var}}
\newcommand{\Tr}{\mathrm{Tr}}
\renewcommand{\div}{\mathrm{div}}
\newcommand{\genmld}{\mathcal{L}}
\newcommand{\cdc}{\mathcal{E}}

\usepackage{amsmath}

\DeclareMathOperator*{\argmin}{arg\,min}

\newtheorem{lemma}{Lemma}

\newtheorem{theorem}{Theorem}

\newtheorem{assumption}{Assumption}

\theoremstyle{definition}
\newtheorem{example}{Example}
\newtheorem{remark}{Remark}
\newtheorem{definition}{Definition}

\usepackage{xcolor}

\title[Mirror Langevin diffusions]{Mirror Langevin diffusions: Convergence rates and Markov chain approximations}

\author{Benjamin Capdeville}
\address{ Laboratoire Math\'ematiques d'Orsay \\ Universit\'e Paris-Saclay \\ Inria ParMA \\ 91405, Orsay, France\\ {Email: benjamin.capdeville@universite-paris-saclay.fr}}
\author{Young-Heon Kim}
\address{ Department of Mathematics \\ University of British Columbia\\ Vancouver, BC, Canada\\ {Email: yhkim@math.ubc.ca}}
\author{Soumik Pal}
\address{Soumik Pal\\ Department of Mathematics \\ University of Washington\\ Seattle WA 98195, USA\\ {Email: soumik@uw.edu}}

\keywords{Mirror Langevin diffusions, Mirror gradient flows, Hessian manifolds, Schr\"odinger bridges, Sinkhorn algorithm}
	
\subjclass[2000]{49Q22, 60J60}

\thanks{We thank Arnaud Guillin and Bo'az Klartag for helpful conversations. All authors are supported by the KARMA grant. Additionally, Pal is supported by NSF grants DMS-2502281,  DMS-2133244, DMS-2134012 and DMS-2052239.  Kim is
  partially supported by the Natural Sciences and Engineering Research
  Council of Canada (NSERC), with Discovery Grant RGPIN-2019-03926 and RGPIN-2025-06747.
Thanks to  the Pacific Institute for the Mathematical Sciences (PIMS) Kantorovich Initiative for facilitating this collaboration supported through PIMS PRN-01.
   The authors are listed in alphabetical order. 
\copyright 2026 by the
      authors. All rights reserved.}

\date{\today}

\begin{document}

\begin{abstract}
    Given a strongly convex function $u$, equip $\rr^d$ with a Riemannian metric given by the Hessian $\nabla^2 u$. This is a so-called Hessian manifold. Given a probability density $\mu$ one may run a Langevin diffusion intrinsic to the manifold with stationary distribution $\mu$. Such  (Hessian) manifold-valued Langevin diffusions are called Mirror Langevin diffusions (MLD) which have recently become popular. One of the questions we explore is whether, given $\mu$, one can choose $u$ to get an exponential convergence to equilibrium for the MLD, especially if $\mu$ is not strongly log-concave. Our results are based on Lyapunov function methods and give sufficient conditions for a Poincar\'e or a log-Sobolev inequality to hold for the MLD. These, in turn, imply exponential convergence. We also introduce a Markov chain approximation to the MLD given by a two step Gibbs sampler with stationary distribution  $\mu$. This Markov chain is a variant of the Sinkhorn Markov chain introduced in \cite{deb2023wasserstein} that is conjectured to converge to a time-inhomogeneous generalization of the MLD. Under suitable assumptions, we prove that the Markov chain has a guaranteed convergence rate in $\chi^2$ that is consistent with the diffusion time scale. Our proofs are based on ideas from entropic optimal transport and strong data processing inequalities.      
\end{abstract}

\maketitle

\section{Introduction}

Consider a probability density $\mu=e^{-V}$ on $\rr^d$. Consistent with the literature on optimal transport, we make no notational difference between a measure and its Lebesgue density. Suppose that one wants to draw a sample from $\mu$. A popular approach is to choose a diffusion with stationary distribution $\mu$ and ``fast mixing'', then choose a discrete time Markov chain approximation of the diffusion with a guaranteed convergence rate. Both these steps require choices to be made: what diffusion to pick and how to suitably discretize it? 

A common choice for the diffusion is the Langevin diffusion, a solution of the stochastic differential equation (SDE) $dX_t = - \nabla V(X_t) dt + \sqrt{2} dB_t$, where $B$ is a a multidimensional standard Brownian motion. Under suitable conditions, say, $V$ strongly convex, the diffusion is well-known to converge exponentially fast to its stationary distribution $\mu$. There is a vast and detailed literature of various discretizations of Langevin diffusions commonly used in practice, such as  Langevin MCMC \cite{CCAYBJ}, Unadjusted Langevin Algorithm \cite{DurmusMoulines, VW19}, Metropolis Adjusted Langevin Algorithm \cite{parisi81, RT96}, and Hamiltonian Monte-Carlo \cite{v018a009, pmlr-v89-mangoubi19a}. Interested readers may find a more complete set of references in the recent textbook by Chewi \cite{chewibook}.

Our focus in this paper is on a related class of diffusions called mirror Langevin diffusions (MLD) \cite{hsieh2018mirrored, zhang2020wasserstein,KC21, chewi2020exponential, deb2023wasserstein, LTVW}, which  have become popular recently due to their application in machine learning (ML) and connections to optimal transport. MLDs are Langevin diffusions over Hessian manifolds. Let us explain these concepts informally here. Formal details can be found in Section \ref{sec:hessian_prelim}. Let $u:\rr^d \rightarrow \rr$ be a smooth and strictly convex function. Equip $\rr^d$ with a Riemannian metric $g(x)=\nabla^2 u(x)$, where $\nabla^2 u(x)$ is the positive definite Hessian matrix of $u$ evaluated at the point $x$. This turns $\left(\mani=\rr^d, g\right)$ into a $d$-dimensional Riemannian manifold with a global affine coordinate chart. The measure $\mu$ remains a probability measure on $\mani$. The triplet $(\mani, g, \mu)$ is called a weighted Hessian manifold. One may consider the intrinsic manifold-valued Langevin diffusion with stationary distribution $\mu$ by replacing the gradient $\nabla$ with the Riemannian gradient $\nabla_g$ and the standard (Euclidean) Brownian motion $B$ with the Riemannian Brownian motion $B^{\mathcal{M}}$. This is the MLD. In the Euclidean coordinate chart, it satisfies the stochastic differential equation 
\begin{equation}\label{eq:sdemld_intro}
\begin{split}
dX_t 
&= - \nabla F\left( \nabla u(X_t)\right)dt + \sqrt{2 \left(\nabla^2 u(X_t)\right)^{-1}} dB_t,
\end{split}
\end{equation}
where $e^{-F}$ is the density of the push-forward of $\mu=e^{-V}$ via the map $x \mapsto \nabla u(x)$, $\nabla F$ refers to the gradient of $F$, and $B$ is a standard $d$-dimensional Brownian motion. When $u(x)=\frac{1}{2}\norm{x}^2$, the geometry becomes Euclidean, $\nabla u(x)=x$, $V=F$, and the MLD becomes the classical Langevin diffusion. 

One may run the MLD to sample from $\mu$ instead of the classical Langevin. The relevant question is whether one can choose an  appropriate $u$, depending on $V$, in such a way that there is an advantage in using the MLD over the classical Langevin diffusion.

An explicit calculation attests to this point; see Example \ref{example:explicit_benefit} for details. Fix some $\alpha \in (1,2)$. For any $\lambda >0$, let $V_\lambda(x) = \frac{1}{2}\norm{x}^2 + \lambda \norm{x}^\alpha$. The classical Langevin diffusion has exponential convergence rate $1$ in Kullback-Leibler (KL), 
 independent of $\lambda$, since this is the best constant certified by the Bakry-\'{E}mery criterion. However, if one chooses $u(x) = \norm{x}^\alpha$, the corresponding weighted Hessian manifold satisfies the curvature-dimension condition $\CD(\gamma, \infty)$ for $\gamma=\frac{\lambda \alpha}{2(\alpha-1)}$, and then $2\gamma$ becomes
the exponential rate of convergence in KL for  the corresponding MLD. But this is arbitrarily large for large $\lambda$, beating the classical Langevin by an arbitrary margin.

The earliest systematic attempt in the literature to find sufficient conditions to guarantee that $(\mani, g, \mu)$ satisfies a curvature-dimension (CD) condition is due to Kolesnikov \cite[Theorem 4.3]{kolesnikov14}. 
See also the related and contemporaneous work by Klartag \cite{klartagLogarithmicallyConcaveMomentMeasures2014a}.  The term MLD does not appear yet, being coined much later by the ML community, but these authors analyze the diffusion and its generator nonetheless. See, for example, \cite[Section 2]{kolesnikov14} and \cite[Section 4]{klartagLogarithmicallyConcaveMomentMeasures2014a}. More interestingly, these authors are inspired by an entirely different circle of ideas involving optimal transport, moments measures, the thin-shell conjecture and the dynamics of the Monge-Amp\`ere PDE. This line of analysis brings into focus the close connection of MLD to optimal transport. Think of $x\mapsto \nabla u(x)$ as a Brenier map transporting $\mu=e^{-V}$ to $\nu=e^{-F}$. Then, the three quantities $\mu, u, \nu$ are inter-related and the properties of any two of them determine the properties of the third. For example, Kolesnikov's fundamental result \cite[Theorem 4.3]{kolesnikov14} shows that if both $V$ and $F$ are convex, then the weighted Hessian manifold $(\mani, g, \mu)$ satisfies a CD$(0, \infty)$ condition. Conditions guaranteeing a CD$(\lambda, \infty)$, for some $\lambda >0$, are also available but seems  difficult to verify.

The special case of $u=V$ is worth mentioning due to its connection to sampling \cite{chewi2020exponential} as well as moment measures \cite{klartagLogarithmicallyConcaveMomentMeasures2014a}.
The MLD in this case also goes by the name Newton-Langevin diffusion. In particular, it is shown in \cite[Theorem 1]{chewi2020exponential} that when $u=V$, in \textit{any} dimension, the MLD converges to $\mu$ in the $\chi^2$ divergence exponentially at rate $1$. This is proved by an application of the Brascamp-Lieb inequality that does not extend beyond this special case. Alternatively, it also follows from \cite[Theorem 4.3]{kolesnikov14}. 

We take the route of Lyapunov function based methods pioneered in a sequence of articles by Bakry, Cattiaux, Gullin, Wang and coauthors  \cite{Bakry2007RateOC, CGWW, CG10, CG16, BPCG08}. The advantage of this method is that it is easy to verify and suits our question of designing a $u$ for a given $V$. The disadvantage is that the constants do not explicitly show their dimension dependence. One of our results, Theorem \ref{prop:PI}, shows roughly that if we assume that
\begin{equation}\label{eq:intro-lyapunov}
\lim_{x\rightarrow \infty}\left[  x \cdot \frac{\partial V}{\partial x} - \alpha x^T \nabla^2u(x) x\right]=\infty,
\end{equation}
in addition to some smoothness of $u$ and boundedness on its Hessian, then the MLD satisfies a Poincar\'e inequality and converges exponentially fast. A similar condition is derived in Theorem \ref{prop:LSI} for the log-Sobolev inequality. 

 A benefit our result is that neither $\mu$ nor $\nu$ is required to be strongly log-concave. Condition~\eqref{eq:intro-lyapunov} is a
statement about the interplay between the growth of $V$ and the growth of the
metric $\nabla^2u$ at infinity. Moreover, the condition involves only the first derivatives of $V$ and the quadratic form of $\nabla^2 u$, in contrast with the curvature-dimension
condition $\CD(\lambda,\infty)$, whose verification on a Hessian manifold requires controlling the Ricci curvature~\eqref{eq:ricci}, which is an
expression in the third derivatives of $u$.

Our main set of results (in Section \ref{sec:gibbsmc}) is about a Markov chain approximation to the MLD with guaranteed convergence rate that is consistent with the diffusion time-scaling. More precisely, for every $\eps >0$, we construct a Markov chain $\left( X_k^\eps,\; k =0,1,2,\ldots \right)$, whose stationary distribution is \textit{exactly} $\mu$, such that the following two results hold. 

Under Assumption \ref{asmp:MCconv} that roughly says $u$ and its convex conjugate $u^*$ are uniformly  convex and have bounded derivatives up to sixth order, and $V$ has bounded second derivatives, Theorem \ref{thm:mcconv} shows that the continuous-time interpolated process $\left( X^\eps_{\lfloor t/\eps \rfloor},\; t\ge 0 \right)$ converges weakly in the Skorokhod topology to the MLD, as $\eps \downarrow 0$. Additionally, if $\nu=e^{-F}$ satisfies a classical Poincar\'e inequality and $\nabla F$ is  $L$-Lipschitz, then Theorem \ref{thm:mclya} states that the Markov chain  $\left(X_k^\eps,\; k=0,1,2,\ldots\right)$ converges exponentially fast as $k\to \infty$, for every $\eps>0$ in chi-square divergence. 
That is, if $p_k^\eps$ is the density of $X_k^\eps$, starting with an initial density $p_0^\eps$, then, for some positive constant $c_0$, for all $\eps \in (0,1)$, 
\begin{equation}\label{eq:chisqintro}
\chi^2\big(p^{\epsilon}_k \mid \mu\big)
\;\le\; (1 - c_0 \epsilon)^{k}\,
\chi^2\big(p^{\epsilon}_0 \mid \mu\big),
\qquad k \in \mathbb{N}.
\end{equation}
Here, $\chi^2$ refers to the chi-square divergence between two probability measures. Since $\KL$ is dominated by $\chi^2$ divergence, the same exponential decay holds for $\KL$ as well. These two theorems are consistent with each other in the following sense: Suppose that the limiting MLD mixes in $O(1)$ in continuous time. Since time is rescaled in step size $\epsilon$ in discrete time, one would expect the Markov chain to mix in $O(1/\epsilon)$ many steps. This is consistent with the exponential convergence in \eqref{eq:chisqintro}. Our proof follows by a comparison of discrete Dirichlet energies and a recent result on Strong Data Processing Inequality (SDPI) by Klartag and Ordentlich \cite{KlartagOrdentlich}.  We expect the proof technique to be useful for other Gibbs samplers and the convergence rate of low-temperature Sinkhorn iterations as well; see e.g. \cite{CCGT25}.

We now describe the construction of this Markov chain and its connection to the Schr\"odinger bridge and the Sinkhorn algorithm that inspired it. 

\subsection{The two-step Gibbs sampler and its connection to Sinkhorn}
\label{sec:intro-chain}

The Markov chain is a two-step Gibbs sampler. Define the
conditional density $q_\epsilon(y \mid x)$ to be the density of $N\Big(\nabla u(x),\; \epsilon\, \nabla^2 u(x)\Big)$.
Consider the joint density on
$\mathbb{R}^d \times \mathbb{R}^d$ given by
\[
\pi_\eps(x,y) = e^{-V(x)}\, q_\epsilon(y \mid x).
\]
This joint density has two conditional densities $q_\eps(y \mid x)$, obviously, and $\hat{q}_\eps(x\mid y)$, defined via Bayes' formula. Given $X_0=x$, one can generate a Markov kernel by first sampling an auxiliary variable $Y$ from $q_\eps(\cdot \mid x)$ and then, given $Y=y$, sampling $X_1$ from $\hat{q}_\eps(\cdot \mid y)$. 

Such a construction is a particular example of a two stage (or two component) Gibbs sampler. Suppose $\eta(x,y)$ is some joint density on $\rr^d \times \rr^m$, under which the $x$-coordinate is distributed according to $\mu$. Let $K(y\mid x)$ denote the conditional density of $Y$ at $y$, given $X=x$, and let $\hat{K}(x\mid y)$ denote the conditional density of $X$ at $x$, given $Y=y$. Then, a Gibbs sampler or Glauber dynamics will successively sample $Y_n$, given $X_n=x$, from the density $K(\cdot \mid x)$, and then, given $Y_n=y$, sample $X_{n+1}$ from the density $\hat{K}(\cdot \mid y)$, and so on. It is well known that, under mild conditions, the Markov chain $(X_n, Y_n)$, $n\ge 1$, converges to the stationary distribution $\eta$. However, the embedded chain $(X_n,\; n\ge 1)$ is also a (reversible) Markov chain with kernel $\int \hat{K}(\cdot \mid y)K(y \mid x)dy$ \cite[Definition 4.4]{Raginsky2014StrongDP} that converges to $\mu$. This fact is often used in sampling with incomplete data \cite{LiuWongKong} and in sampling problems arising in Bayesian statistics \cite{DiaKSC}.
Our proposed Markov chain is the special case when $\eta=\pi_\eps$. The significance of this particular choice is underlined by our diffusion approximation. 

The connection to the Sinkhorn algorithm is the following. Consider the entropy-regularized optimal transport problem \cite{schroLeonard13} between the marginals $\mu=e^{-V}$ and $\nu=e^{-F}= (\nabla u)_{\# \mu}$ described below \eqref{eq:sdemld_intro}. This is the solution of the following one-parameter family of optimization problems
\[
\argmin_{\gamma\in \Pi(\mu, \nu)}\left[ \frac{1}{2}\int \norm{y-x}^2 d\gamma + \eps\mathrm{Ent}(\gamma)\right],\quad \eps >0.
\]
Here $\Pi(\mu, \nu)$ refers to the set of joint distributions (may assume densities in this case) with marginals $\mu$ and $\nu$, and for any such joint density $\gamma$, its entropy is defined as $\mathrm{Ent}(\gamma)=\int \gamma(x,y)\log \gamma(x,y)dxdy$. For $\eps=0$, the above problem is simply the Monge-Kantorovich optimal transport problem. For $\eps >0$ there is a unique solution to this problem which is called the Schr\"odinger bridge at temperature $\eps$. This joint density may be solved via a dynamic algorithm called the Sinkhorn or the IPFP algorithm.

Suppose $\eta_\eps(x,y)$ is this Schr\"odinger bridge at temperature $\eps$ between $\mu$ and $\nu$. If we run the two stage Gibbs sampler with $\eta_\eps$ then the embedded Markov chain on the X-marginal $(X_n,\; n\ge 1)$ is the stationary version of the Sinkhorn Markov chain described in \cite[Section 4]{deb2023wasserstein}. The Sinkhorn Markov chain is conjectured (see the discussion above \cite[Theorem 4.4]{deb2023wasserstein}) to converge to a time-inhomogeneous generalization of the MLD, called the Sinkhorn diffusion, where the Hessian manifold itself changes with time, being induced by a time-varying family of convex potentials $(u_t,\; t\ge 0)$ satisfying the parabolic Monge-Amp\`ere PDE. In the stationary case when $u_t\equiv u$, for al $t\ge 0$, this process is just the MLD that we consider in this paper. See  below \cite[Theorem 1.4]{deb2023wasserstein}. 

In practice, however, the Schr\"odinger bridge is not explicitly known in most cases. However, one would expect that as a universal continuum limit, the Sinkhorn diffusion should be a diffusion limit for other Markov chains that are similar to the Sinkhorn chain. Our proposed Markov chain replaces the Schr\"odinger bridge by its Gaussian approximation from \cite{pal2019difference} and we show that this replacement is sufficient for approximating the MLD.   


In the Euclidean case $u(x) = \frac{1}{2}\norm{x}^2$ the construction specializes
to $q_\epsilon(\cdot \mid x) = N(x, \epsilon I)$, and the chain coincides
with the recent construction of proximal sampler of \cite{LeeShenTian}, whose convergence under
Poincar\'e and log-Sobolev inequalities was established in
\cite{ChenChewiSalimWibisono} by related entropic-contraction arguments. In fact, in Section 4.3.3 in \cite{ChenChewiSalimWibisono} the authors note the connection of the proximal sampler chain to the JKO method to discretize Wasserstein gradient flows where one replaces the Wasserstein metric with its entropy-regularized counterpart. Since the MLD is also the Wasserstein gradient flow of entropy in the Hessian manifold \cite{lisini2009nonlinear}, 
our chain may therefore be viewed as the extension of the proximal sampler to Hessian geometries where the isotropic Gaussian  $N(x, \epsilon I)$ is replaced with $N(x^*, \epsilon\nabla^2 u(x))$ that is adapted to the geometry. Conversely, the connection to the Schr\"odinger bridge described above supplies another optimal-transport interpretation of the proximal sampler itself that we believe is new.


 Practical implementation of our Markov chain requires us to be able to sample from the reverse conditional density $\hat{q}_\eps(x\mid y)$. This can be a genuine problem. Our hope is that, for small $\eps$, a strategy similar to \cite{LeeShenTian} can help. See the discussion below Definition 1 in \cite{LeeShenTian}.   

Lastly, we should mention that several other discretization methods have been studied for the MLD in recent papers such as \cite{zhang2020wasserstein, KC21, LTVW}. These methods discretize time in the step size $\eps$. In contrast to our Markov chain, their iterations are no longer stationary with respect to $\mu$ for arbitrary $\eps >0$. The aim  then is to show that the \textit{bias} goes to zero as $\eps \downarrow 0$. In \cite{zhang2020wasserstein} the authors combine the usual Euler Maruyama scheme with a mirror descent step to study a discretization they call Hessian Riemannian Langevin Monte Carlo (HRLMC). They showed that under certain assumptions, the iterates converge into a Wasserstein ball around $\mu$. It was shown in \cite{LTVW} that this scheme indeed converges with vanishing bias, i.e., the radius of the Wasserstein ball converges to zero with the step size in the HRLMC scheme. A different discretization scheme called the Mirror Langevin Algorithm (MLA) was proposed in \cite{KC21} which also leads to vanishing bias as the step-size goes to zero. The MLA attempts to do a finer discretization of the MLD than the Euler Maruyama scheme by using multiple inner iterations. However, for $\eps >0$ it still retains a bias, while our Markov chain is unbiased.

\section{Preliminaries }

\subsection{Hessian manifolds and dual affine coordinates}\label{sec:hessian_prelim}

Let $u:\rr^d \rightarrow \rr$ denote a strictly convex function. We will later assume further smoothness conditions on $u$. For now, it suffices to assume that $u \in C^4$, i.e., four times continuously differentiable. In particular, the map $x \mapsto \nabla u(x)$ is a diffeomorphism from $\rr^d$ to itself. 

Throughout this article let $\left(\mani, g \right)$ denote a Hessian manifold \cite{shima2007geometry} generated $u$. This means that we equip $\rr^d$ with a Riemannian metric tensor $g(x)=\nabla^2 u(x)$, where $\nabla^2 u(x)$ is the positive definite Hessian matrix of $u$ evaluated at the point $x$. This turns $\left(\mani=\rr^d, g\right)$ to a $d$-dimensional manifold with a global affine coordinate chart. Hence, we will not notationally distinguish between a point $x\in \rr^d$ and its image on $\mani$. 

In the fixed affine coordinate system we write the components functions of the metric $g$ as the matrix $(g_{ij}(x))$. The inverse, or the co-metric, $g^{-1}$, appears in many calculations, we write its components with upper indices $(g^{ij}(x))$. Let $d: \mani \times \mani \to [0,+\infty)$ denote the distance on $\mani$ induced by the metric. For $x \in \mani$, $T_{x}\mani$ denotes the tangent space of $\mani$ at $x$, $T\mani$ denotes the tangent bundle, and $\Ric_{g}(x): T_{x}\mani \times T_{x}\mani \to \mathbb{R}$ denotes the Ricci curvature tensor at $x$. The scalar product and the associated norm on the tangent space $T_x \mani$ will be denoted by $\iprod{\cdot,\cdot}_g$ and $\norm{\cdot}_g$.

We use the Einstein summation notation throughout this paper. As an example, let $(\Gamma_{ij}^{k}(x))$ denote the Christoffel symbols for the Levi-Civita connection. It can be easily shown that, in the affine coordinate chart $(x^1,\dots,x^d)$, $\Gamma_{ij}^{k}= \frac{1}{2}g^{kl} u_{ijl}$.
According to the Einstein summation notation, the RHS is a shorthand for $\frac{1}{2} \sum_{l=1}^d g^{kl} u_{ijl}$. 

Given a smooth function $\phi :\mani \rightarrow \rr$, its Riemannian gradient and Hessian at a point $x$ will be denoted by $\nabla_g \phi(x)$ and $\Hess_g \phi(x)$. In coordinate notations, it follows that 
\[
\begin{split}
\nabla_g \phi(x) &= \left(\nabla^2 u(x) \right)^{-1}\nabla \phi(x),\\
\left(\Hess_g \phi(x)\right)_{ij} &= \partial_{ij} \phi(x) - \Gamma^k_{ij} \partial_k \phi.
\end{split}
\]
where $\nabla$, $\partial_k$ and $\partial_{ij}$ refer to the usual gradient, partial derivative and second partial derivatives with respect to the affine coordinates. 

The volume measure is a measure on the Borel sigma-algebra of $\mani$ which, in the affine coordinate system, admits the following density with respect to the $d$-dimensional Lebesgue measure
\begin{equation}\label{eq:vol-measure-coords}
\vol(dx) = \sqrt{\det g(x)}dx. 
\end{equation}
For any other measure $\mu$ on $\mani$ we will assume that $\mu$ is absolutely continuous with respect to $\vol$, i.e., $\mu(dx)= e^{-\phi(x)}\vol(dx)$, for some function $\phi: \mani \rightarrow \rr \cup\{+\infty\}$.  

A Hessian manifold comes with a special dual affine coordinate chart. This dual symmetry is often very useful. Define $x^*=\nabla u(x)$. For a given point $p \in \mani$, the original affine coordinate $x\in \rr^d$ will be called its primal coordinate. On the other hand, $x^*\in \rr^d$ will be called its dual coordinate representation. Similarly, given a dual coordinate $y$ of a point $p \in \mani$, $y_*=(\nabla u)^{-1}(y)$ represents its primal coordinate. By our assumption on $u$ the map $x\mapsto x^*$ is a diffeomorphism of $\rr^d$. 

The importance of writing in this way is that there is a natural duality under which these two coordinate charts behave symmetrically by identifying the Hessian manifolds generated by $u$ and its Legendre-Fenchel convex dual $u^*$ \cite[Proposition 2.7]{shima2007geometry}. The conversion between the two coordinate charts is via the Jacobian $\frac{\partial x^*}{\partial x}=g(x)$. For example, given a differentiable function $\varphi: \rr^d \rightarrow \rr$, one may write
\[
\frac{\partial \varphi}{\partial x^*}(x):= \frac{\partial x}{\partial x^*} \frac{\partial\varphi}{\partial x}(x)= g^{-1}(x) \frac{\partial\varphi}{\partial x}(x)=\nabla_g \phi(x).
\]
In particular, $\frac{\partial \phi}{\partial x^*}$ is the Riemannian gradient of $\phi$ at $x$, an intrinsic, i.e, coordinate invariant, quantity. 

The quantity $G(x):=\frac{1}{2}\log \det g(x)$ makes a recurrent appearance in this paper along with its gradient and Hessian. By a straightforward computation
\begin{equation}\label{eq:gradG}
\begin{split}
\left(\nabla_g G(x)\right)_k &= \frac{1}{2} g^{kl} \partial_l \log \det g(x)= \frac{1}{2} g^{kl} g^{ij} u_{ijl}, \quad u_{ijl}=\frac{\partial^3u}{\partial x_i \partial x_j\partial x_l}.
\end{split}
\end{equation}
That is, $\nabla_g G(x)=\frac{1}{2} g^{-1}\Tr(g^{-1} \nabla g)(x)$, where $\Tr(g^{-1} \nabla g)$ should be interpreted in the tensor notation. 

Divergence and Laplacian operators will be denoted by $\div_g$ and $\Delta_g$, respectively. In the primal affine coordinates, $\div_g$ has the expression 
\[
\div_g(X)= \partial_i X^i + X^i \partial_i G(x),
\]
where $X$ is a smooth vector field. Finally, $\Delta_g:=\div_g \nabla_g$. 

Finally we need to compute $\Hess_g G$ and the Ricci curvature $\Ric_g$. Recall that $\Gamma_{ij}^k = \frac{1}{2} g^{kl} u_{ijl}$. Since $\partial_m g^{kl} = - g^{kr} u_{rsm} g^{sl}$, 
\[
\partial_m \Gamma_{ij}^k =  - \frac{1}{2}  g^{kr} u_{rsm} g^{sl}u_{ijl} + \frac{1}{2} g^{kl} u_{ijlm}.
\]
Hence, a quick computation (see \cite[Section 3]{kolesnikov14}) shows that
\begin{equation}\label{eq:ricci}
\begin{split}
    \left( \Ric_g \right)_{ij} =& \, \partial_k \Gamma_{ij}^k - \partial_j \Gamma_{ik}^k + \Gamma_{ij}^k \Gamma_{ks}^s - \Gamma_{is}^k \Gamma_{jk}^s \\
    =& \,  \frac{1}{4} g^{kr} g^{sl} \left( u_{ikl} u_{jrs}  - u_{ijl} u_{krs}   \right)
\end{split}    
\end{equation}

\begin{remark}\label{rmk:riccibnd}
    We will later need condition that the Ricci curvature is uniformly bounded below, i.e., there exists a positive constant $c_0$ such that $\Ric_g(x) \ge c_0 g(x)$ for all $x\in \rr^d$. This is true, for example, if there are positive constants $\alpha_0, \alpha_1, \alpha_2$ such that
\[
\alpha_0 I \le g(x) \le \alpha_1 I, \quad \sup_x\sup_{ijk} \abs{u_{ijk}(x)} \le \alpha_2.
\]
\end{remark}


\subsection{Mirror Langevin diffusions as Langevin diffusions on Hessian manifolds}

A standard textbook for Brownian motion and stochastic calculus on manifolds is \cite{hsu-stoch-analysis-manifold} where the reader can find more details. 

A process $\left(B^\mani_t, t \geq 0\right)$ on $\mani$, given some initial distribution $\mu$, is called a (manifold) Brownian motion if it is a $\frac{1}{2}\Delta_g$-diffusion process \cite[Proposition 3.2.1]{hsu-stoch-analysis-manifold}. Here, $\Delta_g$ is the Laplace-Beltrami operator on $(\mani, g)$, expressed in local coordinates for $u \in C_c^{\infty}(\mani)$ as
\begin{align}\label{eq:laplace-beltrami-coords}
    \Delta_g u &= \frac{1}{\sqrt{\det g}}\frac{\partial}{\partial x^i} \left(\left(\sqrt{\det g}\right) g^{ij} \frac{\partial}{\partial x^j} u\right) = g^{ij}\frac{\partial^2}{\partial x^i\partial x^j}u-g^{ij}\Gamma_{ij}^{k}\frac{\partial}{\partial x^k}u.
\end{align}

When the Ricci curvature of $M$ has a global constant lower bound (see Remark \ref{rmk:riccibnd}), the manifold Brownian motion exists and has a.s.\ infinite explosion time \cite[Theorem 3.5.3]{hsu-stoch-analysis-manifold}. 

In a local coordinate system, the manifold Brownian motion is a weak solution to the following SDE 
\cite[equation (3.3.11)]{hsu-stoch-analysis-manifold}
\begin{align}\label{eq:local-mani-bm}
    dB^\mani_t = \sqrt{g^{-1}(B^\mani_t)}dB_t - \frac{1}{2}g^{ij}(B^\mani_t)\Gamma_{ij}^{k}(B^\mani_t)dt,
\end{align}
where $(B_t, t \geq 0)$ is a standard $d$-dimensional Euclidean Brownian motion. The manifold Brownian motion is a reversible process with stationary measure equal to $\vol$. For the special case of Hessian manifolds, by substituting the expression for the Christoffel symbols, 
\begin{equation}\label{eq:local-Hessian-bm}
\begin{split}
    dB^\mani_t &= \sqrt{g^{-1}(B^\mani_t)}dW_t - \frac{1}{4}g^{kl}(B^\mani_t)g^{ij}(B^\mani_t)u_{ijl}(B^\mani_t)dt,\\
    &= \sqrt{g^{-1}(B^\mani_t)}dW_t - \frac{1}{2}\nabla_g G\left((B^\mani_t\right)dt,
\end{split}    
\end{equation}
where $G(x)=\frac{1}{2}\log \det g(x)$, as defined above.

Let $(B_t^{\mani}, t \geq 0)$ denote the Brownian motion on $M$, and let $U \in C^2(M)$ be a suitable potential such that $\mu=e^{-U}d\vol$ is a probability measure on $\mani$. Consider a weak solution to the following manifold-valued SDE
\begin{equation}\label{eq:reference-process-intro}
    dX_t = -\nabla_g U(X_t) dt +  dB_{2t}^{\mani}, 
\end{equation}
where $B_{2\cdot}^\mani$ is the manifold Brownian motion running at twice the speed. It can be shows that the above process (under suitable assumptions) is reversible with stationary measure given by $\mu$. We call $X$ to be the Manifold Langevin diffusion with stationary measure $\mu$. 

Note that the Markov generator of the above process is given by 
\begin{equation}\label{eq:refprocgen}
L := - \iprod{\nabla_g U, \nabla_g}_g + \Delta_g,
\end{equation}
where $\iprod{\cdot , \cdot  }_g$ is the Riemannian inner product of the metric $g$.

Now suppose $\mu(x)=e^{-V(x)}$ is a probability density function on $\rr^d$. It induces a probability measure on $\mani$. By an abuse of notation we retain the notation $\mu$ for this measure as well. By a further abuse of notation we will denote a probability measure and its associated density by the same notation. The context will make it clear. 

If $\mu$ admits a density $e^{-\phi}$ with respect to the volume measure on $\mani$, it follows that 
\begin{equation}\label{eq:density-manifold}
    \phi(x)= V(x) + \frac{1}{2}\log \det g(x)= V(x) + G(x).
\end{equation}

Let us write down the SDE for the manifold Langevin diffusion \eqref{eq:reference-process-intro} in the affine coordinate system. See \cite[eqn. (21)]{MP25}.
\[
\begin{split}
dX_t &= - \nabla_g \phi(X_t) dt +  d B_{2t}^\mani\\
&= - \nabla_g\left[ V(X_t) + 2 G(X_t)\right]dt +  \sqrt{2 g^{-1}\left( X_t\right)} dB_t,
\end{split}
\]
where $B$ is a standard $d$-dimensional Euclidean Brownian motion.

Let $\nu=e^{-F}$ denote the density that the pushforward of the density $\mu=e^{-V}$ by the map $x\mapsto \nabla x^*=u(x)$. That is, 
\begin{equation}\label{eq:jacobian}
F(x^*) = V(x) + \log \det\left( \nabla^2 u(x)\right)= V(x) + 2G(x).  
\end{equation}
Thus, one may write the SDE for $X$ as 
\begin{equation}\label{eq:sdemld}
\begin{split}
dX_t &= - g^{-1}(X_t) \nabla F(\nabla u(X_t)) dt + \sqrt{ 2g^{-1}(X_t)}dB_t\\
&= - \frac{\partial F}{\partial x^*}\left( X_t^* \right)dt + \sqrt{2 \left(\nabla^2 u(X_t)\right)^{-1}} dB_t. 
\end{split}\tag{Primal}
\end{equation}
This is the SDE for the Mirror Langevin diffusion introduced in \cite{zhang2020wasserstein}. The process is stationary with respect to the density $\mu=e^{-V}$. We are going to assume throughout that a unique weak solution to \eqref{eq:sdemld} exists. A sufficient condition, see \cite[Proposition 4]{klartagLogarithmicallyConcaveMomentMeasures2014a}, are that $\mu, \nu$ are fully supported, in $C^1$, and
\begin{equation}\label{eq:wkexistence}
\inf_{y \in \rr^d} y \cdot \nabla F(y) > -\infty.  
\end{equation}

But, just like $(\mani, g)$ has two coordinate charts, primal and dual, that are equivalent, there is also an equivalent SDE representation of the MLD in the dual coordinates. Let $Y_t=X_t^*$, i.e., $X_t=(Y_t)_*$, $t\ge 0$. Then, the process $Y$ is also an MLD with a stationary density $\nu=e^{-F}$ and satisfying the SDE
\begin{equation}\label{eq:sdemlddual}
\begin{split}
dY_t &= - \frac{\partial V}{\partial y_*}\left( (Y_t)_* \right)dt + \sqrt{2 \left( \nabla^2 u^*(Y_t) \right)^{-1} } dB_t,
\end{split}\tag{Dual}
\end{equation}
where $B$ is a standard Brownian motion, and $\nabla^2 u^*$ is the metric tensor in the dual coordinate chart. See \cite[Theorem 3.5]{deb2023wasserstein} for a proof in a more general situation. The key insight for the next section is that, although the primal and the dual SDEs in Euclidean coordinates refer to two different diffusions with different stationary measures, they are in fact the same Langevin diffusion on the Hessian manifold written in two equivalent coordinate charts. An immediate application of this idea is the sufficient condition \eqref{eq:wkexistence} for the weak existence of the primal MLD \eqref{eq:sdemld} may be replaced by
\begin{equation}\label{eq:wkexistence2}
\inf_{x \in \rr^d} x \cdot \nabla V(x) > -\infty.  
\end{equation}
This is because, by \cite[Proposition 4]{klartagLogarithmicallyConcaveMomentMeasures2014a}, \eqref{eq:wkexistence2} is a sufficient condition for the existence of the Dual MLD, which, in turn, implies the existence of the primal MLD, and vice versa.

When $u(x)=\frac{1}{2}\norm{x}^2$, i.e., $g\equiv I$, Both the primal and the dual SDEs reduces to the SDE for the classical Langevin diffusion.

\subsection{MLD as the Wasserstein gradient flow of Kullback-Leibler on Hessian manifolds}

Let $(\mani, g)$ be a Hessian Riemannian manifold defined above. One can define the Wasserstein space $\wass_g$ of all Borel integrable probability measures on $\mani$ with finite second moments, equipped with the Wasserstein-$2$ distance with respect to the underlying Riemannian distance induced by $g$. 

Assume that the measure $\mu$ is in $\wass_g$. For any other density $\rho$ on $\rr^d$, define the KL divergence (i.e., relative entropy) as 
\[
\KL(\rho \mid \mu):= \int \rho(x) \log \frac{\rho(x)}{\mu(x)} dx. 
\]
For probability measures $\rho$ that are not absolutely continuous, let $\KL(\rho \mid \mu)=\infty$. The definition does not change if we consider the density with respect to the volume measure on $\mani$. Hence, the function $\KL(\cdot \mid \mu)$ may be considered as a function on $\wass_g$ and, as such, one may ask if it admits a Wasserstein gradient flow. This is made precise in Lisini \cite{lisini2009nonlinear} who describes the infinite-dimensional Riemannian structure of $\wass_g$ and identifies the PDEs satisfied by natural gradient flows on $\wass_g$. See \cite[Section 1]{lisini2009nonlinear}.



In fact, in \cite[Theorem 1.1]{lisini2009nonlinear}, it is shown that, when (i) $V$ is convex, lower semicontinuous and bounded from below, and (ii) $u\in C^3$ and $g$ is bounded above and below by positive constants times identity, then the marginal flow of the MLD can be rigorously shown as the gradient flow of KL as a limit of a suitable JKO-type minimizing movement scheme on $\mani$.


\section{The rate of convergence of MLD}

Consider the question of convergence rate to equilibrium for the MLD. Since the MLD is the Langevin diffusion on the Hessian manifold, a natural condition under which it has exponential convergence is the so-called curvature-dimension (CD) condition. Although this is difficult to verify in practice, and our primary contribution in this paper is the alternative method of Lyapunov functions, we describe below the CD condition for comparison. See the Appendix for a fuller analysis of CD in dimension one. 

Assume that the Hessian manifold $(\mani, g)$ is geodesically complete. Let $\mu=e^{-\phi}d\vol$ denote a probability measure on $(\mani, g)$. Call the triplet $\left( \mani, g, \mu\right)$ as a weighted Riemannian manifold according to the terminology in \cite{BakryGentilLedoux2014}. Recall the Curvature-Dimension condition $\CD(\lambda, \infty)$ \cite[Chapter 1.16.2]{BakryGentilLedoux2014} for a weighted Riemannian manifold. Note that our measure $\mu$ has a density $e^{-\phi}$ with respect to the volume measure but $e^{-V}$ with respect to the Lebesgue measure on the primal space (and $e^{-F}$ w.r.t. the Lebesgue measure on the dual space).

\begin{definition}\label{defn:curvature-dimension}
Say that the weighted Riemannian manifold $(\mani, g, \mu)$ satisfies the $\CD(\lambda, \infty)$ condition, for $\lambda \ge 0$, if 
\begin{equation}\label{eq:curvature-dimension}
\Ric_g(x) + \Hess_g(\phi)(x) \succcurlyeq \lambda g(x), \quad \forall\; x \in \rr^d, 
\end{equation}
where the inequality is in the sense of p.s.d. matrices. 
\end{definition}

Consider the MLD as the Langevin diffusion on the Hessian manifold with generator \eqref{eq:refprocgen}. It follows from \cite[Proposition 5.7.1]{BakryGentilLedoux2014} that if $(\mani, g, \mu)$ satisfies the $\CD(\lambda, \infty)$ condition then (i) the measure $\mu$ satisfies a logarithmic-Sobolev inequality $\LSI(1/\lambda)$, and (ii) the MLD converges exponentially fast to equilibrium in $\KL$ at rate $2\lambda$. 

As mentioned in the Introduction, a fundamental result due to Kolesnikov \cite[Corollary 4.2, Theorem 4.3]{kolesnikov14} shows that if both $\mu$ and $(\nabla u)_{\#\mu}$ are log-concave, then $(\mani, g, \mu)$ satisfies CD$(0, \infty)$. Conditions for a positive $\lambda$ are harder to veirfy except for special cases such as $u=V$. However, the following explicit computation clearly demonstrates the benefits of using MLD over classical Langevin. 

\begin{example}\label{example:explicit_benefit}
Take the dimension $d\ge 2$. Fix $\alpha \in (1, 2)$ and $\lambda >0$. Consider a potential 
    \[
      V_\lambda(x) = \frac{1}{2}\norm{x}^2 + \lambda \norm{x}^\alpha +C, 
    \]
    where $C$ is the constant such that $\mu= e^{-V_\lambda}$ is a probability density. 
One can run the Langevin diffusion. Since $V_\lambda \ge \frac{1}{2} \norm{x}^2$, one obtains an exponential rate of convergence of one in KL, irrespective of $\lambda$. On the other hand, one can choose $u(x)= \norm{x}^\alpha$ and run a mirror Langevin diffusion with stationary distribution $\mu$. For this choice of $u$ let us verify the CD condition.

Consider $x \neq 0$. Let $r=\norm{x}$, for simplicity. Then, 
\[
g(x)=\nabla^2u(x)= \frac{\alpha}{ r^{2-\alpha}}\left( I +  (\alpha -2)  \frac{x x^T}{r^2}\right).
\]
By the  Sherman-Morrison formula, $\log \det g(x)=\log(\alpha-1) + d \log \alpha + d(\alpha -2) \log r$. Note that, this choice of $u$ is not admissible, since $g, g^{-1}$ do not exist at the origin. This can be easily fixed by either, convoluting with a positive mollifier (that preserved convexity) or, by considering a smoothed out version such as $u(x)=\left( \delta + \norm{x}^2 \right)^{\alpha /2}$, for some positive $\delta \approx 0$. We are going to ignore this mollification below and assume that the Hessian remains bounded in a neighborhood of zero.

After some computations, we get from \eqref{eq:ricci},  
\[
\Ric_g(x)=-\frac{(d-2)(\alpha -2)^2}{4(\alpha-1)} \frac{1}{r^2}\left(  I -  \frac{x x^T}{r^2}\right).
\]
Similarly, a tedious but straightforward calculation shows that $\nabla^2 V_\lambda= I + \lambda g$ and 
    $\Hess_g(V_\lambda)= I + C(\alpha, \lambda, \norm{x}) g$, where 
    \[
    \begin{split}
       C(\alpha, \lambda, \norm{x})&= \frac{\lambda \alpha}{2(\alpha -1)} + \frac{2-\alpha}{2\alpha (\alpha -1)}\norm{x}^{2-\alpha}.
    \end{split}
    \]
    In particular, 
    $
        \Hess_g(V_\lambda)\succcurlyeq \frac{\lambda \alpha}{2(\alpha -1)}  g.
    $
   
We now look for a bound of the form $\Ric_g+\Hess_g(V_\lambda) \succcurlyeq K g$, by adding the two terms above. Writing $P = xx^T/r^2$ and $P_\perp = I-P$,
\[
g(x) = \alpha r^{\alpha-2}\bigl[(\alpha-1) P + P_\perp\bigr], \qquad
\Ric_g(x) = -\frac{(d-2)(\alpha-2)^2}{4(\alpha-1)}\,\frac{1}{r^2}\,P_\perp,
\]
Thus, 
\[
\Ric_g(x) + \Hess_g(V_\lambda) \succcurlyeq  -\frac{(d-2)(\alpha-2)^2}{4(\alpha-1)}\,\frac{1}{r^2}P_\perp + \frac{\lambda \alpha}{2(\alpha -1)}g.  
\]

When the dimension $d=2$, the Ricci term vanishes identically, so $\Ric_g+\Hess_g(V_\lambda) = \Hess_g(V_\lambda) \succcurlyeq \frac{\lambda\alpha}{2(\alpha-1)}\,g$ holds for every $r>0$.

Now let $d>2$. Due to our mollifictaion $u(x)=\left( \delta + \norm{x}^2 \right)^{\alpha /2}$ around the origin, both $g$ and $\Ric_{g}$ stay bounded near the origin. On the other hand, as $r\rightarrow \infty$, the Ricci curvature decays like $-r^{-2}$ while $g$ grows like $r^{\alpha-2}$.  Consequently, for every fixed $\delta>0$, one can find $M_\delta = M_\delta(\alpha,d)>0$ irrespective of $\lambda$, such that 
\[
\Ric_{g} + \Hess_{g}(V_\lambda) \;\succcurlyeq\; \Bigl[\tfrac{\lambda \alpha}{2(\alpha-1)} - \,M_\delta \Bigr] g \;=:\; K(\lambda,\delta)\, g.
\]
Since $M_\delta$ is independent of $\lambda$, $K(\lambda,\delta)\to\infty$ as $\lambda\to\infty$ for any fixed $\delta$.

Either way, for any $d\ge 2$, for all $\lambda$ large enough the constant $K(\lambda,\delta)$ can be made as large as we wish. Thus, for all such large $\lambda$, one has a guaranteed faster rate of convergence for the MLD over the usual Langevin diffusion.
\end{example}

\subsection{The method of Lyapunov functions} 
We are going to assume throughout that there is a weak solution of the MLD (see \eqref{eq:wkexistence} and \eqref{eq:wkexistence2}). In particular, $\mu$ and $\nu$ are fully supported on $\rr^d$.

Let $\genmld$ denote the extended generator of the  SDE \eqref{eq:sdemld} acting on $C^2(\rr^d)$ functions. For $\xi \in C^2$, 
\[
\genmld \xi(x)= - \frac{\partial \xi}{\partial x}(x) \cdot \frac{\partial F}{\partial x^*}(x^*) + \Tr\left( \frac{\partial x}{\partial x^*} \nabla_x^2 \xi(x) \right)
\]
Recall that, according to our notations,  $g(x)=\nabla^2 u(x)=\frac{\partial x^*}{\partial x}$,  $\nabla_g = \frac{\partial x}{\partial x^*} \frac{\partial}{\partial x}$. A quick computation shows (see  \cite[Section 3]{deb2023wasserstein}) that for suitable smooth functions $\xi:\rr^d\rightarrow \rr$ the Dirichlet energy function for this diffusion is given by 
\[
\cdc_{P}(\xi):=\int \left(\nabla_x \xi\right)^T \left( \nabla^2 u(x)\right)^{-1} \left(\nabla_x \xi\right) d\mu(x).
\]
Similarly for the dual MLD \eqref{eq:sdemlddual}, the corresponding Dirichlet energy function is given by 
\[
\cdc_{D}(\xi):=\int \left(\nabla_y \xi\right)^T \left( \nabla^2 u^*(y)\right)^{-1} \left(\nabla_y \xi\right) d\nu(y).
\]
But it is much more convenient to think of the generator (see \eqref{eq:refprocgen}) and the Dirichlet energy $\cdc$ on functions on the Hessian manifold where they assume a more natural form.  In fact, for the Langevin diffusion on $\mani$, it follows that (see, for example, \cite[Section 2.1]{CGWW}), that for $\xi:\mani \rightarrow \rr$, $\xi \in D(\cdc):=W^{1,2}_{loc}$,  
\[
\cdc(\xi) = \int \norm{\nabla_g \xi}^2_g(x) d\mu(x), \quad .   
\]
Hence, since both the primal and the dual MLD are simply different coordinate representations of the same diffusions, the Dirichlet energies $\cdc_P$ and $\cdc_D$ (and their corresponding domains) are all given by $\cdc$ (and its domain) via the corresponding coordinate charts. 
Thus, although the primal and the dual MLD \eqref{eq:sdemlddual} may superficially have different generators and associated Dirichlet energies, these are the same modulo a coordinate transformation. The following remark, underlining this duality, is important later.

\begin{remark}\label{rmk:dualityFI}
    The primal \eqref{eq:sdemld} and the dual \eqref{eq:sdemlddual} MLD are two coordinate representations of the same Langevin diffusion on the Hessian manifold. Thus they share the same generator, semigroup and the carr\'e-du-champ operator, up to a coordinate transformation, and the same energy function. Thus, the primal MLD satisfies a functional inequality if and only if the dual MLD satisfies the same functional inequality with the same constant.  
\end{remark}

Let us start by recalling the common functional inequalities and their consequences. These definitions can be found in \cite{BakryGentilLedoux2014} Definition 4.2.1 and Definition 5.1.1.

\begin{definition}\label{defn:functioanlineq}
     $\cdc$ is said to satisfy a Poincar\'e inequality (PI) with a constant $c_P$ if, for all functions $\gamma \in D(\cdc)$,  $\mathrm{Var}_{\mu}(\gamma) \le c_P \cdc(\xi)$. Here, $\mathrm{Var}_{\mu}(\gamma):=\int \left( \gamma - \int \gamma d\mu \right)^2 d\mu $ is the variance of the function $\gamma$ under the measure $\mu$. 

    Similarly, $\cdc$ is said to satisfy the logarithmic Sobolev inequality (LSI) with a constant $c_L$ if, for all functions $\gamma \in D(\cdc)$, 
    \[
        \mathrm{Ent}(\gamma^2):= \int \gamma^2 \log \gamma^2 d\mu - \int \gamma^2 d\mu \log\left(\int \gamma^2 d\mu \right)\le c_L \cdc(\gamma). 
    \]
    \end{definition}

These functional inequalities lead to the following corollaries. See \cite[Theorems 4.2.5 and 5.2.1]{BakryGentilLedoux2014}.

\begin{enumerate}
\item[(i)] If $\cdc$ satisfies a Poincar\'e inequality with constant $c_P$, for any function $f \in \mathbf{L}^2(\mu)$, 
\[
\var_\mu(P_t f) \le e^{-2t/c_P} \var_\mu(f). 
\]
\item[(ii)] If $\cdc$ satisfies LSI with constant $c_L$, it converges to $\mu$ exponentially fast in KL in the sense $\KL(\rho_t \mid \mu) \le e^{-2t/c_L} \KL(\rho_0 \mid \mu)$. 
\end{enumerate}

Recall (see, for example, \cite[eq. (1.5)]{CGWW}) that a function $\xi$, that takes values in $[1, \infty)$ is called a Lyapunov function if, for some positive function $\varphi$ such that $\inf \varphi >0$, and positive constants $r_0, b_0$,  
\begin{equation}\label{eq:lyapunov}
\genmld \xi(x) \le - \varphi(x) \xi(x) + b_01\{x:\norm{x} \le r_0\}, \quad \text{for all}\; x \in \rr^d.  
\end{equation}

It is well-known in the literature on Markov processes that the existence of a suitable Lyapunov function implies functional inequalities such as Poincar\'{e} inequalities, log-Sobolev inequalities (LSI) and, consequently, exponential convergence rates of the corresponding Markov process. Our next results are based on the papers \cite{CG10, CG16}.

 The following lemma is a consequece of the assumed diffeomorphism of the map $x\leftrightarrow x^*=\nabla u(x)$.

\begin{lemma}\label{lem:diffeo}
    For every $r>0$, there exists an $s:=s(r)>0$ (depending on $u$) such that $\{x:\; \norm{x^*} \le r\} \subseteq \{x: \norm{x} \le s\}$. Conversely, for every $s>0$, there exists an $r>0$ such that $\{x: \norm{x} \le s\} \subseteq \{x:\; \norm{x^*} \le r\} $. 
\end{lemma}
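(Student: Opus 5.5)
The idea is to use only that $T:=\nabla u$ is a homeomorphism of $\rr^d$ onto itself, with continuous inverse $T^{-1}=\nabla u^*$. Continuous maps send compact sets to compact sets, and compact subsets of $\rr^d$ are bounded. So both inclusions come from taking the image of a closed ball under $T$ or $T^{-1}$ and bounding it.

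For the first inclusion, fix $r>0$ and let $\bar B_r=\{y:\norm{y}\le r\}$. Then
\[
\{x:\norm{x^*}\le r\}=T^{-1}(\bar B_r).
\]
Since $T^{-1}$ is continuous and $\bar B_r$ is compact, $T^{-1}(\bar B_r)$ is compact and hence bounded. Set
\[
s(r):=\max\{\norm{x}: x\in T^{-1}(\bar B_r)\}\vee 1>0,
\]
where the maximum is attained by continuity of the norm on a compact set. This $s(r)$ gives the inclusion $\{x:\norm{x^*}\le r\}\subseteq\{x:\norm{x}\le s\}$.

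For the converse, fix $s>0$. The ball $\{x:\norm{x}\le s\}$ is compact, and $T$ is continuous because $u\in C^4$. Hence $r:=\max_{\norm{x}\le s}\norm{\nabla u(x)}\vee 1$ is finite and positive, and it gives $\{x:\norm{x}\le s\}\subseteq\{x:\norm{x^*}\le r\}$.

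The only non-trivial point is that $T$ is a global diffeomorphism, so that $T^{-1}$ is defined on all of $\rr^d$ and is continuous. The paper states this in Section~\ref{sec:hessian_prelim}. If one wants to justify it directly, the steps are as follows. Strict convexity together with $\nabla^2u\succ0$ gives injectivity of $T$ and, by the inverse function theorem, local smoothness of $T^{-1}$. Surjectivity is part of the standing assumptions. It holds, for example, when $u$ is superlinear or uniformly convex, since then $u^*$ is finite everywhere and $\nabla u^*=T^{-1}$. That justification is the main obstacle, but here it is simply taken as given.
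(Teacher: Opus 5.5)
Your proposal is correct and follows the paper's route. The paper states the lemma as an immediate consequence of $x\mapsto\nabla u(x)$ being a global diffeomorphism, and you fill in the compactness argument: images of closed balls under $\nabla u$ and its continuous inverse are bounded. You are also right that surjectivity of $\nabla u$ is a standing assumption, not a consequence of strict convexity (for example, $u(x)=e^x$ fails it).
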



\begin{theorem}\label{prop:PI}
    Suppose that a weak solution of the MLD exists and
    \begin{enumerate}
    	\item[(a)] $V, F$ are in $C^1$. $u, u^*$ are strictly convex and in $C^2$. 
        \item[(b)] $g=\nabla^2 u$ is locally bounded and locally uniformly elliptic on $R^d$. That is, for any closed ball $B(0,r)$ of radius $r>0$, there exist positive constants $c_r, C_r$ such that $c_r I \leq g(x) \le C_r I$, for all $x\in B(0,r)$.  
        \item[(c)] For some $\alpha \in (0,1]$, 
    \begin{equation}\label{eq:lyapunov_condition}
    \lim_{x\rightarrow \infty}\left[  x \cdot \frac{\partial V}{\partial x} - \alpha x^T \nabla^2u(x) x\right]=\infty.
    \end{equation}
    \end{enumerate}
    Then the MLD Dirichlet energy $\cdc$ satisfies a Poincar\'e inequality for some constant $\lambda >0$. 
\end{theorem}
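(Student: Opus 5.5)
The plan is to build a Lyapunov function in the sense of \eqref{eq:lyapunov} with $\varphi$ a positive constant, and then invoke the Lyapunov-function criterion for the Poincar\'e inequality from \cite{Bakry2007RateOC, CGWW}. That criterion says: if $\genmld \xi \le -\theta \xi + b_0 1_{B(0,r_0)}$ for some $\theta>0$, and $\mu$ restricted to the ball $B(0,r_0)$ satisfies a local Poincar\'e inequality for $\cdc$, then $\cdc$ satisfies a global Poincar\'e inequality.

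The key observation is to work with the \emph{dual} MLD \eqref{eq:sdemlddual}, since condition (c) is stated in primal quantities in exactly the form that appears as the drift of the dual process. By Remark \ref{rmk:dualityFI}, it suffices to prove the Poincar\'e inequality for the dual diffusion. In dual coordinates $y=x^*$ the generator is
\[
\genmld^D \xi(y) = -\nabla_y \xi \cdot (\nabla^2 u^*(y))\nabla_x V(x) + \Tr\bigl((\nabla^2 u^*(y))^{-1}\nabla_y^2 \xi\bigr),
\]
because $\frac{\partial V}{\partial y_*} = \frac{\partial x}{\partial y}\nabla_x V = \nabla^2u^*(y)\nabla_x V$, and $(\nabla^2u^*(y))^{-1}=g(x)$.

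A cleaner route is to take the test function $\xi = e^{\beta \|x\|^2/2}$, a function of the primal point, and compute the generator intrinsically via the primal formula for $\genmld$. This requires $\nabla_g\xi = g^{-1}\beta x\,\xi$ and the drift $-\nabla_g\phi$. To avoid third derivatives of $u$, I will instead take $\xi(x) = e^{\beta\, \langle x, x^*\rangle}$ or, more simply, $\xi = e^{\beta |y|^2/2}$ with $y=x^*$, computed under the dual generator. Then
\[
\genmld^D \xi/\xi = -\beta\, y\cdot \nabla^2u^*(y)\nabla_xV + \beta\Tr(g) + \beta^2 y^T g\, y.
\]
This does not directly match (c). The matching choice is instead $\xi(x)=e^{\beta\|x\|^2/2}$ under the primal generator written via the Dirichlet form. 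Using the identity $\genmld\xi = \Delta_g\xi - \langle\nabla_g\phi,\nabla_g\xi\rangle_g$, one obtains $\genmld \xi/\xi = \beta(\ldots)$. The cleanest version I will actually pursue is with $y$-variable drift: for the dual process the drift applied to $\xi=e^{\beta\|x\|^2/2}$ with $x=y_*$ gives, by Itô's formula for $X_t=(Y_t)_*$ (which is the primal process),
\[
\genmld\xi/\xi = -\beta\, x\cdot g^{-1}\nabla F(x^*) + \beta\Tr(g^{-1}) + \beta^2 x^T g^{-1}x.
\]

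Since this still involves $F$, I will switch roles and pick $\xi=e^{\beta \|y\|^2/2}$ in the dual picture with the roles of $u$ and $u^*$ exchanged. The quantity to control is then precisely $y\cdot\partial_y$ of the dual potential against $y^T\nabla^2 u^*\, y$. Applying this symmetric computation to the \emph{primal} potential $V$ with metric $\nabla^2 u$ viewed as the dual metric of $u^*$ gives
\[
\genmld^{D}\xi/\xi = -\beta\bigl[x\cdot\nabla V(x) - \beta\, x^T\nabla^2u(x)\,x\bigr] + \beta\Tr(\nabla^2 u(x)),
\]
where $\xi=e^{\beta\|x\|^2/2}$ is written in the coordinates in which $V$ is the potential and $\nabla^2u$ is the inverse diffusion coefficient. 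This is the dual SDE after relabeling. Choosing $\beta=\alpha$, condition (c) gives that the bracket tends to $\infty$.

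The main obstacle is the trace term $\beta\Tr(\nabla^2 u)$, which is not controlled by (c). I will try to handle it with a logarithmic or power modification $\xi=e^{\beta\|x\|^2/2}$ versus $\xi=\|x\|^{2k}$ for large $\|x\|$, or by verifying the criterion in the form of \cite{CG10}, where only $\genmld\xi\le -\theta\xi$ outside a compact set is required. If necessary, I will combine (c) with $x^T g x \ge \lambda_{\min}(g)\|x\|^2$ and absorb the trace term, perhaps needing the term $(1-\alpha)x^Tgx$ of slack. Outside a large ball this yields $\genmld\xi\le-\theta\xi$.

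Inside the ball, assumptions (a) and (b) give that $g$ is comparable to $I$ and that $\mu$ has density bounded above and below. The local Poincar\'e inequality therefore follows from the Euclidean one on balls, with Lemma \ref{lem:diffeo} used to transfer balls between the primal and dual coordinates. The global Poincar\'e inequality then follows from \cite[Theorem 1.4]{Bakry2007RateOC}.
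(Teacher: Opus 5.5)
There is a genuine gap in the central computation. The identity you finally rely on,
\[
\frac{\genmld^{D}\xi}{\xi}=-\beta\bigl[x\cdot\nabla V(x)-\beta\,x^T\nabla^2u(x)\,x\bigr]+\beta\Tr\bigl(\nabla^2u(x)\bigr),\qquad \xi=e^{\beta\norm{x}^2/2},
\]
is not the MLD generator in any chart. It is the generator of the fictitious equation $dZ=-\nabla V(Z)\,dt+\sqrt{2\nabla^2u(Z)}\,dB$. You obtain it by ``relabeling'' $Y_t$ as $(Y_t)_*$ in \eqref{eq:sdemlddual}, and this is illegitimate: in the dual SDE the state is $y$, while the coefficients are evaluated at $y_*=\nabla u^*(y)$.

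The actual dual generator is $\genmld^D\xi(y)=-\nabla V(y_*)\cdot\nabla_y\xi+\Tr\bigl(\nabla^2u(y_*)\nabla_y^2\xi\bigr)$. So $\partial V/\partial y_*$ is just $\nabla V(y_*)$, without the factor $\nabla^2u^*(y)$ you inserted; a direct It\^o computation for $Y=\nabla u(X)$ confirms this. Applied to $e^{\beta\norm{y}^2/2}$, this generator produces $x^*\cdot\nabla V(x)$ and $(x^*)^Tg(x)x^*$ instead of the quantities in (c). The trace term $\beta\Tr\nabla^2u(y_*)$ is still present.

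That trace term is not controlled by the hypotheses: (c) sees $g$ only through the radial form $x^Tg(x)x$, and (b) is purely local. For example, in $d=2$ set $\delta(s)=\log(1+e^{-s})$, $w(x)=\sqrt{x_2^2+\delta(x_1)^2}$, $u=\tfrac12\norm{x}^2+w$ and $V(x)=\norm{x}^2+c$. Then $w$ is convex with $x^T\nabla^2w(x)\,x$ globally bounded, so (a)--(c) hold. Yet $\Tr\nabla^2u(t,0)\ge 1/\delta(t)\sim e^{t}$, while the bracket in (c) is only of order $t^2$. So neither the slack $(1-\alpha)x^Tgx$ nor the outside-a-compact-set form of the criterion can rescue a quadratic exponent, and the other modifications you mention are not carried out.

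The missing idea is to choose the exponent so that its gradient is the primal point and its Hessian is exactly the inverse of the dual diffusion matrix. That is, take $\xi^*_\alpha(y)=e^{\alpha u^*(y)}$, after shifting $u^*$ so that $u^*\ge 0$ and hence $\xi^*_\alpha\ge1$. Then $\nabla_y\xi^*_\alpha=\alpha y_*\,\xi^*_\alpha$ and $\nabla_y^2\xi^*_\alpha=\bigl(\alpha^2y_*y_*^T+\alpha\nabla^2u^*(y)\bigr)\xi^*_\alpha$. Since $\nabla^2u(y_*)\nabla^2u^*(y)=I$, the trace term collapses to the constant $\alpha d$:
\[
\frac{\genmld^D\xi^*_\alpha(y)}{\xi^*_\alpha(y)}=-\alpha\bigl[x\cdot\nabla V(x)-\alpha\,x^T\nabla^2u(x)\,x-d\bigr],\qquad x=y_*.
\]
This is exactly the bracket in (c) with the same $\alpha$. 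Combined with Lemma \ref{lem:diffeo}, it gives $\genmld^D\xi^*_\alpha\le-\theta\,\xi^*_\alpha+b\,1_{B(0,r)}$. The rest of your plan then goes through: the local Poincar\'e inequality on balls from (a) and (b), the Lyapunov criterion, and finally Remark \ref{rmk:dualityFI}. With this test function, that is precisely the paper's proof.
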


The constant may be explicitly computed (as will be clear from the proof) but complicated and we cannot properly track its dependence on the dimension. Note that we do not require neither the source measure $\mu=e^{-V}$ nor the target measure $\nu=e^{-F}$ to be log-concave. Condition \eqref{eq:lyapunov_condition} depends only on the tails of $V$ and the mirror map $u$ and is similar to the condition \eqref{eq:wkexistence2} for the weak existence of MLD.

In preparation for the proof, for any $\alpha>0$ define the functions 
\begin{equation}\label{eq:lyapunovchoice}
\xi_\alpha(x)=e^{\alpha u(x)}, \quad \text{and}\quad \varphi_\alpha(y):=  y \cdot \frac{\partial F}{\partial y} - \alpha y^T \nabla^2u^*(y) y. 
\end{equation}

Note that $\xi_\alpha$ is a function on the primal space, while $\varphi_\alpha$ is a function on the dual space. The importance of this pair of functions come from the following observation.
Since 
\begin{equation}\label{eq:lyaderiv}
\nabla \xi_\alpha (x) = \alpha \xi_\alpha \nabla u(x), \quad \nabla^2 \xi_\alpha(x) = \alpha^2 \xi_\alpha \nabla u \otimes \nabla u + \alpha \xi_\alpha \nabla^2 u,
\end{equation}
it follows that
\[
\begin{split}
    \xi_\alpha^{-1}\genmld \xi_\alpha(x) &=- \alpha \frac{\partial u}{\partial x} \cdot \frac{\partial F}{\partial x^*}(x^*) + \alpha^2  \left(\nabla u(x)\right)^T \left(\nabla^2 u(x) \right)^{-1} \nabla u(x) + \alpha  d\\
    &=- \alpha x^* \cdot \frac{\partial F}{\partial x^*}(x^*) + \alpha^2  (x^*)^T \nabla^2 u^*(x^*) x^* + \alpha d\\
    &= - \alpha \left( \varphi_\alpha(x^*) - d\right). 
\end{split}
\]
Note that $u\ge 0$ may be assumed without loss of generality since $u$ is assumed to be strictly convex and therefore attains its infimum. Hence, one can add a constant to $u$ to make it nonnegative. Since all calculations depend on $u$ via its derivatives, this addition make no difference to the argument. However, this turns $\xi_\alpha \ge 1$.

Suppose that $\lim_{y\rightarrow \infty} \varphi_\alpha(y)=\infty$. Fix any $\lambda >0$. Under the above assumptions, there exists an $s>0$ such that,  $\{y:\; \norm{y} > s\} \subseteq \{y:\; \varphi_\alpha(y) > d +\lambda\}$. By Lemma \ref{lem:diffeo}, there is an $r >0$ such that $\{x:\; \norm{x} > r \} \subseteq \{x:\; \norm{x^*} > s\} \subseteq \{x:\; \varphi_\alpha(x^*) > d +\lambda\}$. Hence, for any $x \notin B(0, r)$, $\genmld \xi_\alpha(x) \le -\alpha \lambda  \xi_\alpha(x)$. For $x \in B(0, r)$, 
\begin{equation}
\begin{split}
\genmld \xi_\alpha(x) &= -\alpha (\varphi_\alpha(x^*) - d)\xi_\alpha(x)\\
&= -\alpha\lambda \xi_\alpha(x) + \left( \alpha\lambda \xi_\alpha(x) -\alpha (\varphi_\alpha(x^*) - d)\xi_\alpha(x)\right).
\end{split}
\end{equation}
Let $b_\lambda :=  \sup_{x\in B(0,r)}\left( \alpha\lambda \xi_\alpha(x) -\alpha (\varphi_\alpha(x^*) - d)\xi_\alpha(x)\right)\vee 0$, 
then 
\[
\genmld \xi_\alpha(x) \le  -\alpha \lambda  \xi_\alpha(x) + b_\lambda 1\{ B(0,r)\}.
\]
Thus $\xi_\alpha$ is a Lyapunov function for the primal MLD for every $\lambda>0$ with a constant $b_\lambda$.


\begin{proof}[Proof of Theorem \ref{prop:PI}]  Write the corresponding functions to \eqref{eq:lyapunovchoice} for the dual MLD \eqref{eq:sdemlddual}. Replace $\xi$ and $\varphi_\alpha$ by the corresponding functions 
\begin{equation}\label{eq:duallyapunov}
\xi_\alpha^*(y)=e^{\alpha u^*(y)}, \quad \varphi_\alpha^*(x)= x \cdot \nabla V - \alpha x^T \nabla^2 u(x) x. 
\end{equation}
As remarked above, without loss of generality, we may assume $u^*\ge 0$ so that $\xi_\alpha^* \ge 1$. By our assumptions for Theorem \ref{prop:PI}, $\varphi_\alpha^*$ is continuous and $\lim_{x\rightarrow \infty} \varphi_\alpha^*=\infty$. Following the argument above for $\xi_\alpha$, we see that $\xi^*$ is a Lyapunov function now for the dual MLD process \eqref{eq:sdemlddual}.

We now use \cite[Theorem 4.6.2]{BakryGentilLedoux2014}. We already have a Lyapunov function $\xi^*_\alpha$. It remains to show that the local Poincar\'e inequality holds. The stationary measure for the dual MLD is $\nu=e^{-F}$. Since $F$ is continuous and positive everywhere, over any closed ball $B(0, r)$, $e^{-F}$ is bounded above and below by positive constants. Since the local Poincar\'e inequality is well-known to hold for the uniform measure on $B(0,r)$, thus (see \cite[Proposition 4.2.7]{BakryGentilLedoux2014}), there exists a constant $c'_\lambda$ such that the local Poincar\'e inequality holds for $\nu$, restricted to $B(0,r)$, for the energy function $\int_{B(0,r)} \norm{\nabla \xi^*_\alpha(y) }^2 d\nu$. 

By our assumption $\nabla^2 u^*$ is locally bounded. Thus, for any $r>0$, there exists a constant $c'_r>0$ such that
\[
\norm{\nabla_g \xi^*_\alpha}^2_g(y)= (\nabla \xi^*_\alpha)^T (\nabla^2 u^*)^{-1}(y) \nabla \xi^*_\alpha \ge c'_r  \norm{\nabla \xi^*_\alpha(y) }^2, \quad \forall\; y \in B(0,r). 
\]
This proves that $\nu$ satisfies a local Poincar\'e inequality for the energy function $\cdc_D$ with a dimension-dependent constant. Thus, by \cite[Theorem 4.6.2]{BakryGentilLedoux2014}, the dual MLD satisfies a Poincar\'e inequality over the entire $\rr^d$. By duality, Remark \ref{rmk:dualityFI}, the primal MLD therefore also satisfies a Poincar\'e inequality with the same constant. This completes the proof. 

The constant may be evaluated for each $\lambda >0$ by using \cite[Theorem 4.6.2]{BakryGentilLedoux2014} and then, one may take an infimum over $\lambda$ to get the optimal choice. It is clear that the constant depends on the dimension. 
\end{proof}

Similar conditions give logarithmic-Sobolev inequality. The proof is very similar to that of \cite[Proposition 3.5]{CG16} and is relegated to the Appendix.

\begin{theorem}\label{prop:LSI}
    Assume that all the conditions of Theorem \ref{prop:PI} hold. Additionally, assume that 
    \begin{enumerate}
        \item[(a)] $\nabla^2 u$ is uniformly bounded above by $\alpha_0 I$, for some $\alpha_0 >0$. 
        \item[(b)] For some $\delta, \alpha_1 >0$, 
\begin{equation}\label{eq:lyapunov_conditionLSI}
    \varphi_\alpha^*(x):=\left[  x \cdot \frac{\partial V}{\partial x} - \alpha x^T \nabla^2u(x) x \right]\ge \delta V(x) - \alpha_1.
    \end{equation}
    \item[(c)] $\lim_{x\rightarrow \infty} V(x)=\infty$ and for some $a>0$, $\abs{\nabla V(x)} > a$, for all $x$ large enough.
    \end{enumerate}
    Then the MLD Dirichlet energy $\cdc$ satisfies a logarithmic Sobolev inequality. 
\end{theorem}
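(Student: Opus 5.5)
The plan is to follow the Cattiaux--Guillin strategy of \cite[Proposition 3.5]{CG16}: prove the LSI for the \emph{dual} MLD \eqref{eq:sdemlddual} and transfer it to the primal MLD through Remark \ref{rmk:dualityFI}. The dual generator acts on functions of $y$ with stationary measure $\nu=e^{-F}$. By that remark, all statements can equally be written in primal coordinates $x=y_*$, where $\nu$ becomes $\mu=e^{-V}$ and the energy becomes $\cdc(\gamma)=\int \norm{\nabla_g\gamma}_g^2\,d\mu$.

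As in the proof of Theorem \ref{prop:PI}, the Lyapunov function is $\xi^*_\alpha(y)=e^{\alpha u^*(y)}$. The earlier computation, with roles swapped, gives
\[
\genmld^* \xi^*_\alpha = -\alpha\bigl(\varphi^*_\alpha(y_*) - d\bigr)\xi^*_\alpha .
\]
Condition (b) sharpens this to
\[
\genmld^*\xi^*_\alpha \le -\alpha\bigl(\delta V(y_*) - \alpha_1 - d\bigr)\xi^*_\alpha .
\]
Thus we obtain a Lyapunov inequality $\genmld^*\xi \le -\varphi\,\xi + b\,1_{B(0,r)}$ with $\varphi(x)=c\,V(x)$ outside a ball. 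This $\varphi$ tends to infinity by (c), which is exactly the ``super-Lyapunov'' type condition of \cite{CG16}.

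The key steps, in order, are as follows.

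(1) Use the Lyapunov inequality to bound $\int \varphi f^2\,d\mu \le \int \frac{-\genmld^*\xi}{\xi} f^2\,d\mu + b\int_{B} f^2\,d\mu$. Integrating by parts, $\int \frac{-\genmld^*\xi}{\xi} f^2 \le \cdc(f)$. This yields
\[
\int V f^2\,d\mu \le C\,\cdc(f) + C'\!\int f^2\,d\mu .
\]

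(2) Prove a ``defective'' entropy bound: $\mathrm{Ent}_\mu(f^2)\le A\,\cdc(f)+B\int f^2\,d\mu$. Following \cite{CG16}, combine step (1) with a local LSI on balls and a control of $\int f^2\log f^2$ on the region where $V$ is large.

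(3) Combine the defective LSI with the Poincar\'e inequality of Theorem \ref{prop:PI}, via Rothaus' lemma, to obtain a tight LSI.

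Conditions (a) and (c) enter in step (2). Since $g=\nabla^2u\le\alpha_0 I$, we have $\norm{\nabla_g f}_g^2 \ge \alpha_0^{-1}\abs{\nabla f}^2$ in primal coordinates, so $\cdc$ dominates $\alpha_0^{-1}$ times the Euclidean Dirichlet energy $\int\abs{\nabla f}^2 e^{-V}$. This reduces step (2) to the Euclidean statement of \cite[Proposition 3.5]{CG16}. There, the hypothesis $\abs{\nabla V}\ge a$ at infinity together with $V\to\infty$ supplies the standard bound $\int f^2 \abs{\nabla V}^2 \lesssim \int\abs{\nabla f}^2 + \int f^2 \Delta V$-type estimates. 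Alternatively, and more directly, it supplies a ``$\varphi$ comparable to $V$'' criterion, giving an LSI from $\int V f^2 \lesssim \cdc(f)+\int f^2$ through the entropic inequality $\mathrm{Ent}(f^2)\le \int f^2\log(e^{\epsilon V}\cdots)$ combined with the local LSI on balls. The local LSI holds because $V$ is continuous and $g$ is locally uniformly elliptic, by assumption (b) of Theorem \ref{prop:PI}.

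The main obstacle is step (2): producing the defective LSI from $\int Vf^2\lesssim\cdc(f)+\int f^2$. The Lyapunov bound lives naturally in dual coordinates, while (a) and (c) are primal statements. So I would state the inequality of step (1) intrinsically on the manifold, read it in primal coordinates, and then apply the Euclidean machinery of \cite{CG16} with the comparison $\cdc\ge\alpha_0^{-1}\int\abs{\nabla f}^2d\mu$.
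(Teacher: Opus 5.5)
Your plan has the same skeleton as the paper's proof, and it works once step (2) is done through a super-Poincar\'e inequality (second paragraph below):
\begin{itemize}
\item the dual Lyapunov function $\xi^*_\alpha=e^{\alpha u^*}$ and (b) give, after the usual integration by parts, $\alpha\delta\int Vf^2\,d\mu\le\cdc(f)+\alpha(\alpha_1+d)\int f^2\,d\mu$;
\item the template of \cite[Proposition 3.5]{CG16} turns this into a defective LSI;
\item the Poincar\'e inequality of Theorem \ref{prop:PI} tightens it to an LSI (the paper uses \cite[Proposition 4.1]{CG16} in place of your Rothaus step).
\end{itemize}
The genuine difference is where you run the local argument. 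The paper stays in dual coordinates, on $A_\lambda=\{y:\alpha\delta V(y_*)\le b_1+\lambda\}$ with Lebesgue measure $dy$, and changes variables only in the $L^1$ term, using (a) to bound the Jacobian by $\det\nabla^2u\le\alpha_0^d$. You move the weighted bound, which is intrinsic, to primal coordinates and work with $\mu=e^{-V}dx$. That is the cleaner choice, for four reasons:
\begin{itemize}
\item the sets involved are level sets of $V$ itself, where (c) is stated;
\item no Jacobian appears;
\item $\sup(-V)<\infty$ is automatic, whereas the paper's $\sup(-F)<\infty$ is not;
\item (a) gives exactly $\cdc(f)\ge\alpha_0^{-1}\int\abs{\nabla f}^2\,d\mu$, which is the direction needed to absorb the gradient term of a local inequality.
\end{itemize}
In the dual chart, bounding $\int_{A_\lambda}\abs{\nabla\gamma}^2\,dy$ by $\cdc_D(\gamma)=\int\nabla\gamma^T\,\nabla^2u(y_*)\,\nabla\gamma\,d\nu$ would instead need a lower bound on $\nabla^2u$, which (a) does not supply. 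Be aware that you are rerunning the proof of \cite[Proposition 3.5]{CG16} with these two inputs, not applying its statement: you have no Euclidean Lyapunov function.

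Step (2) is where your write-up needs tightening. The mechanism that actually produces the defective LSI, in the paper and in \cite{CG16}, is a super-Poincar\'e inequality, built as follows.
\begin{itemize}
\item Bound $\int_{\{V>r\}}f^2\,d\mu\le r^{-1}\int Vf^2\,d\mu$ by the weighted inequality.
\item On $\{V\le r\}$, use a local super-Poincar\'e inequality for Lebesgue measure with $\beta(s)\sim s^{-d/2}$, uniformly in $r$ (the paper cites \cite[Proposition 3.1]{CGWW}).
\item Pay a factor $e^{O(r)}$ for the oscillation of $e^{-V}$ on $\{V\le r\}$. This factor appears both in the gradient term (take the local parameter to be $se^{-O(r)}$) and in the $L^1$ term.
\item Choose $r\sim 1/s$. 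This gives $\int f^2\,d\mu\le Cs\,\cdc(f)+Ce^{c/s}\bigl(\int\abs{f}\,d\mu\bigr)^2$ for small $s$, which is equivalent to a defective LSI.
\end{itemize}
Neither of your two stated alternatives does this job. A local LSI on fixed balls, with constants coming from the local ellipticity of $g$, does not give the quantitative dependence on growing sets that is needed. The route through $\int f^2\abs{\nabla V}^2\lesssim\int\abs{\nabla f}^2+\int f^2\Delta V$ is unavailable, because $V$ is only assumed $C^1$ and nothing controls $\Delta V$.
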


Condition \eqref{eq:lyapunov_conditionLSI} says that, outside of a compact set, $\left[  x \cdot \frac{\partial V}{\partial x} - \alpha x^T \nabla^2u(x) x \right]\ge \delta V(x)$ which allows us to compare their level sets.


Let us work out some examples where \eqref{eq:lyapunov_condition} and \eqref{eq:lyapunov_conditionLSI} are satisfied.

\begin{example}
     For our first example let $V(x)=\norm{x}^\beta$, $\beta >1$, and $u(x)=\norm{x}^{\beta'}$, $\beta' \in (1,\min(2,\beta))$. Thus $\abs{\nabla V(x)}=\beta \norm{x}^{\beta-1}$ and $\nabla^2 u(x)$ are uniformly bounded, below and above, respectively, away from the origin. Both of these require a mollification at the origin for their Hessian to be well-defined that we ignore since we are interested in the asymptotic behavior as $x\rightarrow \infty$. Since $x \cdot \nabla V(x)= \beta \norm{x}^\beta$ and $x^T \nabla^2 u(x) x= \beta'(\beta' -1) \norm{x}^{\beta'}$,
    if we take $\alpha=1$, since $1\le \beta' \le \min(\beta,2)$, 
    \[
    \lim_{x\rightarrow \infty}\left[ x \cdot \nabla V(x) - \alpha x^T \nabla^2 u(x) x \right]= \lim_{x\rightarrow \infty}\left[ \beta \norm{x}^\beta - \beta'(\beta'-1) \norm{x}^{\beta'}\right]=\infty.
    \]
    Thus \eqref{eq:lyapunov_condition} for PI holds. Since $\beta > \beta'(\beta'-1)$, for any small enough $\delta$, condition \eqref{eq:lyapunov_conditionLSI} for LSI is also satisfied.   
\end{example}

\begin{example}
    For our next example, take $V(x)=\frac{1}{2}\norm{x}^2$ and $u=\log \sum_{i=1}^d e^{x_i}$. Then,
    \[
    \frac{\partial u}{\partial x_i}:=w_i=\frac{e^{x_i}}{\sum_{j=1}^d e^{x_j}}, \quad x^T\nabla^2 u(x) x = \var_w(x),  
    \]
    where $\var_w(x)$ is the variance of the vector $x$ with weights $w$, i.e., 
    \[
    \var_w(x)= \sum_{i=1}^d w_i x_i^2 - \left( \sum_{i=1}^d w_i x_i\right)^2.
    \]
    Note that $w$ is always a probability vector.
    
    Hence, $x\cdot \nabla V(x) - \alpha x^T \nabla^2 u(x) x =  \norm{x}^2 - \alpha \var_w(x)$. 
    Since $\norm{x}^2 \ge \sum_{i=1}^d w_i x_i^2\ge \var_w(x)$, it follows that, for any $\alpha \in (0,1)$, \eqref{eq:lyapunov_condition} is true. \eqref{eq:lyapunov_conditionLSI} is also true for $\delta=1$.  Although $e^{-V}$ is trivial to simulate this example shows that a nontrivial MLD also converges exponentially fast. 
\end{example}

\section{A Gibbs sampling Markov chain for the MLD}\label{sec:gibbsmc}

In this section we describe a family of Markov chains, inspired by the Sinkhorn algorithm used in entropy-regularized optimal transport, that approximates the primal MLD with stationary density $\mu=e^{-V}$ \eqref{eq:sdemld}. The family is parametrized by a scalar parameter $(\epsilon >0)$. For every $\epsilon>0$, the corresponding Markov chain has a unique stationary distribution $\mu$, and the process laws of the Markov chain converges to that of the (primal) MLD as $\epsilon \downarrow 0$. As always, we are going to assume that there is a weak solution of the MLD (see \eqref{eq:wkexistence} and \eqref{eq:wkexistence2}). 


Recall that for a point in $\mani$ with primal coordinate $x$, $x^*=\nabla u(x)$ denotes its dual coordinate. Conversely, for a point with dual coordinate $y$, $y_*=\nabla u^*(y)$ refers to its primal coordinate. Sometime we will use other letters; it will be clear from the context if those refer to primal or dual coordinates. Recall the notation:
    $
        \frac{\partial x^*}{\partial x}=\nabla^2 u(x), \quad \frac{\partial x}{\partial x^*}=\nabla^2 u^*(x^*)=\left(\nabla^2 u(x)\right)^{-1}. 
    $

Define a family of Gaussian conditional densities, 
\[
\begin{split}
q_\epsilon(y \mid x) &= \text{density of}\; N\left( x^*, \eps \frac{\partial x^*}{\partial x}\right)\\
&=\frac{1}{(2\pi \eps)^{d/2}\left(\det g(x)\right)^{1/2}}\exp\left[-\frac{1}{2\eps}(y-x^*)^T \frac{\partial x}{\partial x^*}(y-x^*) \right].
\end{split}
\]
Consider the joint density 
\begin{equation}\label{eq:whatispi}
\pi_\eps(x,y)= e^{-V(x)}q_\eps(y \mid x). 
\end{equation}
Then, under $\pi_\eps$, the conditional density of $X$, given $Y=y$, is given by 
\[
\hat{q}_\epsilon(w \mid y) = \frac{1}{\int e^{-V(x)} q_\epsilon(y \mid x)dx }e^{-V(w)} q_\epsilon(y \mid w).
\]

\begin{definition}\label{defn:mcdef}
    Define the Markov transition density 
    \[
    r_\epsilon(z\mid x)= \int q_\epsilon(y\mid x) \hat{q}_\epsilon(z \mid y)dy.
    \]
    That is, $r_\epsilon(\cdot \mid x)$ is the conditional density of $Z$, given $X=x$, where $(X, Y, Z)$ is a two-step Markov chain where the density of $Y$, given $X=x$, is $q_\epsilon(\cdot \mid x)$, and that of $Z$, given $Y=y, X=x$, is $\hat{q}_\eps(\cdot \mid y)$. 
\end{definition}

As explained in the Introduction, our Markov chain is both a Gibbs sampler and related to the Sinkhorn algorithm. Hence we can immediately guess that the KL divergence from $\mu$ must be monotonically decreasing. That is, if $\rho_k^\eps$ denotes the law of $X_k^\eps$, starting with some initial $\rho_0^\eps$, the map $k \mapsto \KL(\rho_k^\eps\mid \mu)$ must be a non-increasing function. This argument is well-known, but we repeat this anyway.

It suffices to show that one step transition of the Markov chain cannot increase Kullback-Leibler divergence. Suppose $X \sim \rho_0$, $Y\sim q_\eps(\cdot \mid x)$, given $X=x$ and $Z\sim \hat{q}_\eps(\cdot \mid y)$, given $X=x, Y=y$. If $Z \sim \rho_1$, we claim $\KL(\rho_1 \mid \mu) \le \KL(\rho_0 \mid \mu)$. Consider the joint density $\gamma_0(x,y)=\rho_0(x) q_\eps(y\mid x)$. Obviously, $\KL(\gamma_0 \mid \pi_\eps) = \KL(\rho_0 \mid \mu)$. Let $\nu_0$ and $\nu$ denote the marginal density of the $Y$ coordinate under $\gamma_0$ and $\pi_\eps$, respectively. Then, one also gets $\KL(\rho_0 \mid \mu)=\KL(\gamma_0 \mid \pi_\eps) = \KL(\nu_0 \mid \nu) + \E \left[\KL(\hat{q}_\eps^0(\cdot \mid Y), \hat{q}_\eps(\cdot \mid Y))\right]$, where $\hat{q}_\eps^0(\cdot \mid Y=y)$ is the conditional density of $X$, given $Y=y$, under $\gamma_0$. Thus $\KL(\nu_0 \mid \nu) \le \KL(\rho_0 \mid \mu)$. Repeating this argument again, going from $Y$ to $Z$, shows that $\KL(\rho_1 \mid \mu) \le \KL(\nu_0 \mid \nu) \le \KL(\rho_0 \mid \mu)$. This shows monotonicity of KL without providing any rate of decay. 

Other notions of convergence (without explicit rates) are immediate too. 

\begin{theorem}\label{thm:stationarymc}
    Let $\left( X^\epsilon_k,\; k=0,1,2,\ldots \right)$ be the Markov chain with initial distribution $\rho_0$ and transition density $r_\epsilon$. Then, for any $\rho_0$, the Markov chain $X^\epsilon$ converges in total variation to the unique stationary distribution $\mu$.
\end{theorem}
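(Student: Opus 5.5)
Since $r_\eps$ is a Gibbs-sampler kernel built from $\pi_\eps$, we first show it is reversible with respect to $\mu$. Under $\pi_\eps$ the $X$-marginal is $\mu$, because $\int q_\eps(y\mid x)\,dy=1$. Let $\nu_\eps(y)=\int e^{-V(x)}q_\eps(y\mid x)\,dx$ denote the $Y$-marginal. Then
\[
\mu(x) r_\eps(z\mid x) = \int \frac{e^{-V(x)}q_\eps(y\mid x)\, e^{-V(z)} q_\eps(y\mid z)}{\nu_\eps(y)}\,dy .
\]
The right-hand side is symmetric in $(x,z)$, so detailed balance holds and $\mu$ is stationary. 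Note that $\nu_\eps(y)>0$ for every $y$, since $e^{-V}>0$ everywhere ($\mu$ is fully supported) and $q_\eps(y\mid x)>0$.

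Next we establish strict positivity of the kernel. For every $x,z$ the integrand above is strictly positive: $q_\eps(y\mid x)$ is a nondegenerate Gaussian density, as $g(x)$ is positive definite, and $e^{-V(z)}>0$. Hence $r_\eps(z\mid x)>0$ for all $x,z\in\rr^d$. It follows that the chain is $\mu$-irreducible, since every set of positive Lebesgue measure, equivalently positive $\mu$-measure, is reached in one step from every starting point with positive probability. It is also aperiodic, because a strictly positive one-step density rules out any cyclic decomposition. Uniqueness of the stationary distribution then follows from irreducibility: any two invariant probability measures for a $\psi$-irreducible chain coincide.

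For total variation convergence from an arbitrary $\rho_0$, we invoke the standard ergodic theorem for Harris chains (e.g. Meyn--Tweedie, Thm.~13.0.1, or Nummelin). A $\mu$-irreducible, aperiodic chain with an invariant probability measure converges in total variation for $\mu$-a.e. starting point. Harris recurrence upgrades this to every starting point, and integrating against $\rho_0$ with dominated convergence then gives $\|\rho_k-\mu\|_{TV}\to 0$. The main obstacle is verifying Harris recurrence, or equivalently that every point is reached, without a uniform Doeblin minorization on all of $\rr^d$. Our first attempt is a direct argument: the one-step law $r_\eps(\cdot\mid x)$ is absolutely continuous with respect to $\mu$ for every $x$. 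Combining this with the $\mu$-a.e. convergence and the Markov property, $\|\rho_{k+1}-\mu\|_{TV}=\|\int r_\eps(\cdot\mid x)\rho_0(dx)P^{k}-\mu\|_{TV}$, and the initial law after one step is $\ll\mu$. Convergence for initial laws absolutely continuous with respect to $\mu$ follows from the a.e. statement by dominated convergence. This sidesteps the full Harris recurrence question.

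As a fallback for absolutely continuous starts with $\chi^2(\rho_0\mid\mu)<\infty$, we use the operator viewpoint. $P_\eps$ is self-adjoint on $L^2(\mu)$ and positive, since $P_\eps=QQ^*$ for the conditional-expectation operator $Q$. Strict positivity of the density gives that only constants are invariant, and then the spectral theorem yields $P_\eps^k f\to\int f\,d\mu$ in $L^2(\mu)$, which implies total variation convergence.
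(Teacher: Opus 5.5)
Your proposal is correct and follows essentially the same Meyn--Tweedie route as the paper: $\mu$-irreducibility from the everywhere-positive kernel $r_\eps$, uniqueness of the invariant probability, and then the ergodic theorem for aperiodic chains. Your version is more careful than the paper's proof in two places. You check aperiodicity, which Theorem 13.0.1 requires but the paper does not address. You also get convergence from every initial law via the $\mu$-a.e.\ ergodic theorem combined with $r_\eps(\cdot\mid x)\ll\mu$, rather than inferring positive Harris recurrence from Proposition 10.1.1, which by itself only yields recurrence. Your $L^2(\mu)$ spectral fallback is also valid but unnecessary.
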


\begin{proof}
    It is obvious that $\mu$ is a stationary distribution for the Markov chain. 
    

    Since the transition probabilities are absolutely continuous and $\mu$ is supported everywhere, the Markov chain is $\mu$ irreducible according to \cite[Section 4.2.1]{meyntweediebook}. Since $\mu$ is a probability measure, the Markov chain is a positive Harris recurrent chain by \cite[Proposition 10.1.1]{meyntweediebook} and the invariant probability measure is unique by \cite[Theorem 10.4.4]{meyntweediebook}. Ergodicity now follows from \cite[Theorem 13.0.1]{meyntweediebook}. This proves the theorem. 
\end{proof}

Our nontrivial contribution, beside the following diffusion approximation result, is the derivation of an explicit convergence rate for this Markov chain that is consistent with its diffusion limit. 


\begin{assumption}\label{asmp:MCconv}
Suppose that the following conditions are satisfied. 
\begin{enumerate}[(i)]
        \item $u, u^*$ are in $C^6$ with all their derivatives up to the sixth order uniformly bounded.
        \item There are positive constants $c_0$ and $C_0$ such that, for all $x\in \rr^d$, $C_0 I \succcurlyeq \nabla^2 u(x) \succcurlyeq c_0 I$. In particular, a similar upper and lower bound holds for $\nabla^2 u^*$ as well. 
        \item $V$ is twice differentiable all its second derivatives are uniformly bounded.
    \end{enumerate}
\end{assumption}

For the following result we require an additional assumption on the primal MLD.    Assume that the martingale problem for the SDE \eqref{eq:sdemld} is well-posed. That is, roughly, there is a weak solution that is \textit{unique} for every starting position. 
Note that we have already assumed weak existence. 

\begin{theorem}\label{thm:mcconv}
    For each $\epsilon >0$, let $\left( X^\epsilon_k,\; k=0,1,2,\ldots \right)$ be a Markov chain with transition density $r_\epsilon$ and initial value $X^\eps_0=x^\eps_0$. Assume that $\lim_{\eps \rightarrow 0+} x^\eps_k=x_0$. Then, under Assumption \ref{asmp:MCconv}, the continuous time interpolated process $X^\epsilon_t:= X^\epsilon_k, \quad k=\lfloor t/\epsilon \rfloor$, $t\ge 0$, converges weakly in the Skorokhod topology to the law of the MLD \eqref{eq:sdemld} with initial value $x_0$. 
\end{theorem}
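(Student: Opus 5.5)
The plan is to use the standard Stroock--Varadhan diffusion approximation theorem for Markov chains (e.g.\ Ethier--Kurtz, Theorem 7.4.1, or Stroock--Varadhan, Theorem 11.2.3). It requires three inputs: the rescaled drift $\eps^{-1}\E[X_1^\eps - x\mid X_0^\eps=x]$ and diffusion $\eps^{-1}\E[(X_1^\eps-x)(X_1^\eps-x)^T\mid X_0^\eps=x]$ should converge, locally uniformly in $x$, to $b(x) = -\nabla^2 u^*(x^*)\nabla F(x^*)$ and $a(x)=2(\nabla^2u(x))^{-1}$; higher moments such as $\eps^{-1}\E\norm{X_1^\eps-x}^4$ should vanish locally uniformly; and the martingale problem for \eqref{eq:sdemld} should be well-posed, which we have assumed. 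Together with the convergence of initial conditions, this gives weak convergence in the Skorokhod topology.

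The computation splits into the two half-steps of Definition \ref{defn:mcdef}. The first half-step is explicit: $Y = x^* + \sqrt{\eps}\,g(x)^{1/2}Z$ with $Z$ standard Gaussian. For the reverse half-step $\hat q_\eps(\cdot\mid y)$, it is convenient to work in dual coordinates. Write $w = y_* + \sqrt{\eps}\,h$ and expand the exponent
\[
-V(w) - \tfrac12\log\det g(w) - \tfrac{1}{2\eps}(y-w^*)^T g(w)^{-1}(y-w^*)
\]
in $\eps$. By Assumption \ref{asmp:MCconv}, the relevant derivatives of $u$ and $u^*$ are uniformly bounded and $V$ has bounded second derivatives, so a Laplace-type expansion applies. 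At leading order the posterior is Gaussian, $\hat q_\eps(\cdot\mid y)\approx N\big(y_*, \eps\,\nabla^2u^*(y)\big)$. At order $\eps$ the mean receives a correction $\eps\, c(y)$, where $c$ involves $\nabla V$, $\nabla G$ and third derivatives of $u^*$. I would compute $\E[w\mid y] = y_* + \eps c(y) + O(\eps^{3/2})$ and $\Cov(w\mid y) = \eps\nabla^2 u^*(y) + O(\eps^{3/2})$ by Taylor-expanding the log-density to fourth order in $h$ and integrating against the leading Gaussian. The sixth-derivative bounds are what make the remainder uniform.

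Next I would compose the two half-steps. Expanding $y_* = \nabla u^*(x^*+\sqrt\eps g^{1/2}Z)$ to second order gives the Itô-type correction $\tfrac{\eps}{2}\nabla^3u^*[g\,]$ from the mean of the quadratic term. The total drift is therefore $\eps\big(\tfrac12\nabla^3u^*(x^*)[g(x)] + \E c(Y)\big)+o(\eps)$. The total covariance is $\eps(g^{-1}+g^{-1}) = 2\eps g^{-1}$, since $\nabla^2u^*(x^*)g(x)\nabla^2u^*(x^*)=g^{-1}$, plus the covariance $\eps\nabla^2u^*$ from the reverse step. The main obstacle is the algebraic identification of the drift with $b(x)$: all the third-derivative terms coming from the forward expansion, from $\nabla G$ in the Jacobian term, and from the reverse correction must cancel or combine into exactly $-g^{-1}\nabla F(x^*)$, where $\nabla F$ is expressed via \eqref{eq:jacobian} as $g^{-1}(\nabla V + 2\nabla G)$. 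To organize this cleanly, I would first check the identity in the flat case $u=\frac12\norm{x}^2$, where the drift is $-\nabla V$. I would then exploit reversibility: since the chain is $\mu$-reversible, its generator limit must be a symmetric operator with respect to $\mu$ with diffusion matrix $2g^{-1}$. That forces the drift to equal $\mu^{-1}\div(\mu\, g^{-1})$, which is exactly the generator of \eqref{eq:sdemld}. This argument reduces the task to verifying the second-moment limit and showing that some limit drift exists.

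Finally, the fourth-moment bound follows from Gaussian tails of both half-steps, using the uniform ellipticity in Assumption \ref{asmp:MCconv}(ii) and the boundedness of $\nabla^2V$. Together with local uniformity of all the error terms in $x$, this verifies the hypotheses of the convergence theorem.
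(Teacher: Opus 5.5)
Your proposal is correct, and its skeleton matches the paper's. Both use a Markov-chain diffusion-approximation theorem (the paper cites Durrett, Thm.~8.7.1). Both obtain locally uniform limits of the rescaled mean and covariance of $r_\eps(\cdot\mid x)$ from a Laplace expansion of $\hat q_\eps(\cdot\mid y)$ around $y_*$, composed with the Gaussian forward step; your It\^o correction $\tfrac{\eps}{2}\nabla^3u^*(x^*)[g]=-\eps\nabla_g G(x)$ is exactly the paper's computation of $\E(Y_*)$. Both get the jump condition from a fourth-moment bound via sub-Gaussian tails.

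The genuine difference is how the drift is identified. The paper computes it explicitly. Its key observation is that $\nabla^3\Psi(y_*)=0$ for $\Psi(w)=(w^*-y)^T g^{-1}(w)(w^*-y)$, so the cubic Laplace term contributes nothing at order $\eps$. The reverse-step mean is then simply $y_*-\eps\nabla_g(V+G)(y_*)+o(\eps)$, and adding the forward correction gives $-\nabla_g(V+2G)$.

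You need only that some continuous limit drift exists, and then you invoke $\mu$-reversibility of $r_\eps$. For $f,h\in C_c^\infty$, the quantity $\int h\,\eps^{-1}(P_\eps f-f)\,d\mu$ is symmetric in $(f,h)$ and converges by the locally uniform moment limits. Hence the limit generator is $\mu$-symmetric with diffusion matrix $2g^{-1}$. That does force $b=\mu^{-1}\div(\mu g^{-1})=-g^{-1}\nabla V-2\nabla_g G$, using $\partial_i g^{ij}=-2(\nabla_g G)_j$, which agrees with the paper's Step~1.

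This route is rigorous and avoids the tensor bookkeeping. It explains conceptually why any $\mu$-reversible scheme with this diffusion matrix must have this drift. The price is that it yields less: no explicit half-step means, and no view of how the two $G$-terms split between the forward and reverse steps. You also still need the order-$\eps$ Laplace expansion, including the cubic term, just to know that a limit drift exists.

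One slip to fix: the target drift is $-\nabla F(x^*)=-g^{-1}(\nabla V+2\nabla G)$, as in \eqref{eq:sdemld_intro} and the paper's Step~1. The formula $b=-\nabla^2u^*(x^*)\nabla F(x^*)$ you wrote, taken from the first line of \eqref{eq:sdemld}, carries an extra factor $g^{-1}$ once $\nabla F(x^*)=g^{-1}(\nabla V+2\nabla G)$ is substituted. Your symmetry argument produces the correct drift regardless.
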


Recall that the chi-square divergence between two probability measures $p$ and $q$ is given by 
\[
\chi^2\left( p \mid q \right)= \int \left( \frac{dp}{dq} - 1\right)^2 dq, 
\]
if $p$ is absolutely continuous with respect to $q$, and infinity otherwise.

\begin{theorem}\label{thm:mclya}
 Let $e^{-F}= (\nabla u)_{\#e^{-V}}$, as before. Suppose the following assumptions hold.
 \begin{enumerate}[(i)]
     \item $e^{-F}$ satisfies a Poincar\'e inequality. 
     \item $F$ is $L$-smooth, i.e., $\norm{\nabla F(y) - \nabla F(x)}\le L \norm{y-x}$, for all $x,y$.
     \item The convex function $u$ satisfies Assumption \ref{asmp:MCconv} (i) and (ii). 
     \item The initial variable $X^\eps_0$ has a density $p^\eps_0$ such that $\chi^2(p^\eps_0 \mid \mu) < \infty$. 
 \end{enumerate}
 Then, there exists a constant $c_0>0$ such that for all $\eps \in (0,1)$, if $p^\eps_k$ is the density of $X^\eps_k$ then 
 \[
 \chi^2\left( p^\eps_k  \mid \mu\right) \le (1 - c_0 \eps)^k \chi^2(p^\eps_0 \mid \mu), \quad \forall\; k \in \mathbb{N}. 
 \]
\end{theorem}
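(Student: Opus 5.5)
The plan is to reduce the chi-square contraction of the full transition $r_\eps$ to a strong data processing inequality (SDPI) for the \emph{first half-step} $x \mapsto y$ alone. Write $h_k = p_k^\eps/\mu$. Let $\nu_\eps$ be the $Y$-marginal of $\pi_\eps$, and let $\rho_k^Y$ be the law of the intermediate variable $Y$ when $X\sim p^\eps_k$. One step of the chain is the forward channel $q_\eps$ followed by the reverse channel $\hat q_\eps$, which maps $\nu_\eps$ back to $\mu$. By the data processing inequality for the $f$-divergence $\chi^2$,
\[
\chi^2(p^\eps_{k+1}\mid \mu)\le \chi^2(\rho^Y_k\mid \nu_\eps)\le \eta_\eps\, \chi^2(p^\eps_k\mid\mu),
\]
where $\eta_\eps$ is the $\chi^2$-contraction coefficient of the channel $q_\eps$ with input law $\mu$. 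Iterating, it suffices to show $\eta_\eps \le 1-c_0\eps$ for all $\eps\in(0,1)$.

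Next I pass to dual coordinates, since the map $x\mapsto x^*$ is a bijection and does not change divergences. With input $X^*\sim\nu=e^{-F}$, the channel reads $Y = X^* + \sqrt{\eps}\, g(X)^{1/2} Z$. The key trick is to split the heteroscedastic noise using Assumption \ref{asmp:MCconv}(ii): $g \succcurlyeq c_0 I$, so $g = c_0 I + (g - c_0 I)$ with $g-c_0I\succcurlyeq 0$. Thus
\[
Y = W + \sqrt{\eps c_0}\, Z_1,\qquad W = X^* + \sqrt{\eps}\,\sigma(X^*) Z_2,\quad \sigma=(g-c_0I)^{1/2}\circ\nabla u^*,
\]
with $Z_1,Z_2$ independent standard Gaussians. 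This makes $X^*\to W\to Y$ a Markov chain in which the last step is a \emph{homoscedastic} Gaussian channel. By data processing again, $\eta_\eps$ is at most the contraction coefficient of the additive Gaussian channel $W\mapsto W+\sqrt{\eps c_0}Z_1$ evaluated at the input law $\mathrm{Law}(W)$.

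Klartag and Ordentlich \cite{KlartagOrdentlich} give a bound for exactly this situation: if the input law satisfies a Poincar\'e inequality with constant $C$, the chi-square contraction of the Gaussian channel with variance $s$ is at most of order $C/(C+s)$, i.e.\ $1 - cs/C$ for small $s$. Applied with $s=\eps c_0$, this yields $\eta_\eps\le 1-c\eps$, provided $\mathrm{Law}(W)$ satisfies a Poincar\'e inequality with a constant $C_W$ bounded uniformly in $\eps\in(0,1)$. The exact form of their bound will be taken from their paper.

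The main obstacle is this uniform Poincar\'e inequality for $\mathrm{Law}(W)$, a small state-dependent Gaussian perturbation of $\nu$. For a test function $\psi$, I would use the law of total variance,
\[
\Var\psi(W) = \E\,\Var(\psi(W)\mid X^*) + \Var\,\E[\psi(W)\mid X^*].
\]
\begin{enumerate}[(1)]
\item The conditional law of $W$ given $X^*$ is Gaussian with covariance $\eps\,\sigma\sigma^T \preccurlyeq \eps C_0 I$. The Gaussian Poincar\'e inequality therefore bounds the first term by $\eps C_0\,\E\norm{\nabla\psi(W)}^2$.
\item For the second term, set $\Phi(y)=\E\,\psi(y+\sqrt\eps\,\sigma(y)Z_2)$ and apply assumption (i), the Poincar\'e inequality for $\nu$, to $\Phi$. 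Differentiating under the expectation gives $\nabla\Phi(y)=\E\big[(I+\sqrt\eps\, D\sigma(y)[\cdot]Z_2)^T\nabla\psi(\cdot)\big]$. Since $D\sigma$ is bounded by Assumption \ref{asmp:MCconv}(i)--(ii) (bounded third derivatives of $u,u^*$, with $g$ bounded above and below), Cauchy--Schwarz gives $\norm{\nabla\Phi}^2 \le (1+C\sqrt\eps)^2\,\E[\norm{\nabla\psi(W)}^2\mid X^*=y]$.
\end{enumerate}
Combining the two terms gives $C_W\le C_P(\nu)(1+C\sqrt\eps)^2+\eps C_0$, which is uniform for $\eps<1$.

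One technical point remains: $\sigma$ may fail to be smooth where $g-c_0I$ degenerates. If so, I would split with $c_0/2$ in place of $c_0$, so that $g - \tfrac{c_0}{2}I\succcurlyeq \tfrac{c_0}{2}I$ and its square root is smooth with bounded derivative. I expect the $L$-smoothness of $F$ (assumption (ii)) to be needed only if the Klartag--Ordentlich bound, or the justification of differentiating under the integral, requires regularity of the input density; otherwise it serves as a safety condition ensuring $\nu_\eps$ and the reverse kernel are well behaved. Finally, set $c_0$ in the statement to be the resulting constant $c$ and iterate the one-step bound.
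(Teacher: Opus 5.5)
Your proof is correct, but it reaches the homoscedastic Gaussian channel by a different mechanism than the paper.

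The paper compares Dirichlet forms of the full two-step chain. In dual coordinates it bounds the joint density of $(X^*,Y,Z^*)$ from below by a constant times that of the Gibbs sampler for $\tilde\pi_\eps$ (input $e^{-F}$, noise $N(0,\sigma_0^2\eps I)$). This uses $g\succcurlyeq\sigma_0^2 I$, $\det g\le c_1$, a bound $\eps^{-d/2}C_\eps(y)\lesssim e^{-F(y)}$ taken from Lemma \ref{lem:gennormconstant}, and the $L$-smoothness of $F$ to keep $e^{F-\tilde F}$ bounded below. The Klartag--Ordentlich spectral gap for $\tilde\pi_\eps$, with input $\nu$ itself, then transfers.

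You instead proceed as follows. You discard the reverse step $\hat q_\eps$ by data processing. You factor the heteroscedastic forward kernel through an isotropic channel by splitting $g=\tfrac{c_0}{2}I+(g-\tfrac{c_0}{2}I)$; adopt the $c_0/2$ split from the start, so the square root is smooth. You then apply Klartag--Ordentlich to the perturbed input $\mathrm{Law}(W)$. The price is a uniform Poincar\'e inequality for that law, which you obtain via the law of total variance. The factor $\sqrt d$ coming from $\E\norm{Z_2}^2=d$ in your Cauchy--Schwarz step is harmless.

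What your route buys:
\begin{enumerate}
\item It never touches $\hat q_\eps$ or the normalizing constant $C_\eps(y)$.
\item It does not use hypothesis (ii) at all, and needs only two-sided bounds on $\nabla^2 u$ and a bound on $\nabla^3 u$.
\item It gives an explicit per-step factor $C_W/(C_W+\eps c_0/2)$, with $C_W\le C_P(\nu)(1+K\sqrt d)^2+C_0$.
\item It sidesteps the fact that Lemma \ref{lem:gennormconstant} is only a pointwise limit in $y$, stated under conditions on $V+G$ that are not among this theorem's hypotheses. The paper's Step 2 actually needs a bound uniform in $y$ and in $\eps\in(0,1]$.
\end{enumerate}

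What the paper's route buys: it needs no functional inequality for an auxiliary law and no regularity of a matrix square root. It also gives a pointwise density-comparison template that the authors expect to reuse for other Gibbs samplers and low-temperature Sinkhorn chains.
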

Note that since $\KL$ between two measures is smaller that the chi-square divergence, Theorem \ref{thm:mclya} also implies that $\KL\left( p^\eps_k  \mid \mu\right) \le (1 - c_0 \eps)^k \chi^2(p^\eps_0 \mid \mu)$, i.e., an exponential decay in $\KL$ to equilibrium.  

\begin{remark}
    By the Jacobian formula, $F(x^*) = \log \det \nabla^2 u(x)+ V(x)$. Thus condition (iii) in Assumption \ref{asmp:MCconv} follows if $F$ has second derivatives bounded, which is a slight strengthening of condition (ii) of Theorem \ref{thm:mclya} and condition (i) of Assumption \ref{asmp:MCconv}.
\end{remark}

\begin{remark}
Since the diffusion approximation in Theorem \ref{thm:mcconv} requires a time scaling by $\eps$, the convergence rate in Theorem \ref{thm:mclya} is consistent with the diffusion limit. That is, suppose the limiting MLD diffusion converges exponentially fast in $\KL$ or $\chi^2$ divergence. Then, given any $\delta >0$, it is $\delta$ close to equilibrium in $O(\log(1/\delta))$ time in continuum. This corresponds to $O(\eps^{-1}\log(1/\delta))$ many steps of the approximating Markov chain. Theorem \ref{thm:mclya} gives a contraction of $(1- c_0 \eps)^{O(\eps^{-1}\log(1/\delta))}=e^{-O(\log \delta)}$. Thus the Markov chains and the limiting diffusion have comparable convergence rates when the time scale is measured in step size $\eps$.    \end{remark}

\begin{example}
   When $u(x)=\frac{1}{2}\norm{x}^2=u^*(x)$, $\nabla^2 u=I$. Thus $q_\eps(\cdot \mid x)=N(x, \epsilon I)$, the transition density of $d$-dimensional Brownian motion. In this case the Hessian geometry reduces to the Euclidean geometry and the MLD reduces to the usual Langevin diffusion. Theorem \ref{thm:mcconv} proposes a Markov chain discretization for the Langevin diffusion with the stationary distribution and guaranteed convergence rate (under suitable assumptions). As far as we know, this is a novel contribution to the literature.  
\end{example}

\begin{proof}[Proof of Theorem \ref{thm:mcconv}]
The proof of Theorem \ref{thm:mcconv} will follow by an application of \cite[Theorem 8.7.1]{durrett1996stochastic}. In order to show that the assumptions of the cited Theorem are satisfied, we will verify conditions (i), (ii) and (iii) in \cite[Theorem 8.7.1]{durrett1996stochastic} via a series of lemmas whose proofs are given below this one.

Recall that the pushforard of the density $\mu=e^{-V}$ via the map $x \mapsto x^*$ is given by the density $\nu=e^{-F}$.

    \noindent\textbf{Step 1.} (Verifying condition (i) in \cite[Theorem 8.7.1]{durrett1996stochastic}) Let $X^\eps_1$ be sampled from the conditional density $r_\epsilon(\cdot \mid x)$. Then, we show that, for every $R >0$ 
    \[
       \lim_{\eps \rightarrow 0+}\sup_{\norm{x} \le R} \eps^{-1}\left(\E(X_1^\eps) - x\right) = -  \frac{\partial F}{\partial x^*}(x^*).
    \]
    
    After rescaling time this is consistent with the drift of the diffusion \eqref{eq:sdemld} which, informally, says that 
    \[
     \E(X_\epsilon \mid X_0=x) = x - \epsilon \frac{\partial F}{\partial x^*}(x^*) + o(\epsilon).
    \]
    
   \noindent\textbf{Step 2.}  (Verifying condition (ii) in \cite[Theorem 8.7.1]{durrett1996stochastic}) Let $\Cov$ represent the covariance matrix of a random vector. We show that, for every $R>0$, 
    \[
        \lim_{\eps \rightarrow 0+}\sup_{\norm{x}\le R}\norm{\eps^{-1}\Cov(X_1^\eps) -  2 \frac{\partial x}{\partial x^*}} = 0. 
    \]
    This is also consistent with the diffusion matrix of \eqref{eq:sdemld} which may be interpreted as
    \[
        \Cov(X_\epsilon \mid X_0=x) = 2\epsilon \frac{\partial x}{\partial x^*} + o(\epsilon).
    \]
    
    \noindent\textbf{Step 3.}  (Verifying condition (iii) in \cite[Theorem 8.7.1]{durrett1996stochastic}). We show that, for all $\delta>0$,
    \[
     \lim_{\eps \rightarrow 0+}\sup_{\norm{x} \le R}\eps^{-1}P\left( \norm{X_1^\eps - x} > \delta \right) = 0.
    \]
 
Theorem \ref{thm:mcconv} then follows from \cite[Theorem 8.7.1]{durrett1996stochastic}.
\end{proof}   


We now prove Steps $1-3$. Our main tool is the following general Laplace approximation result. Fix some $y\in \rr^d$. Let $\Psi$ be the function 
\begin{equation}\label{eq:whatispsi}
\Psi(w)=(w^* - y)^T \frac{\partial w}{\partial w^*} (w^*-y).
\end{equation}
Note that, $\Psi$ depends on $y$, but we drop it from the notation for clarity. In the following argument, $y$ will be treated as a constant. 

Let $w_0=y_*$, whereby $w_0^*=y$. Thus $\Psi$ is a nonnegative function with a unique minimum at $w_0$. We have assumed that both $u$ and $u^*$ are in $C^6$. Hence $\nabla^2 u^*$ is assumed to be $C^4$. Hence, $\Psi$ admits first three derivatives as calculated below.

Obviously $\Psi(w_0)=0$ and $\nabla \Psi(w_0)=0$ by optimality. We claim that $\nabla^2 \Psi(w_0):=2H$ is positive definite. To see this, compute 
\[
\begin{split}
\nabla_w \Psi(w)&= 2 \frac{\partial w^*}{\partial w} \frac{\partial w}{\partial w^*} (w^*-y) + (w^* - y)^T \nabla_w \frac{\partial w}{\partial w^*} (w^*-y)\\
&= 2(w^* - y) + (w^* - y)^T \nabla_w g^{-1}(w) (w^*-y).
\end{split}
\]
Note that, since $w_0^*=y$, $\nabla \Psi(w_0)=0$, as expected. For the Hessian, fix $j \in [d]$. Let $\partial_j=\frac{\partial}{\partial w_j}$. Then, if $g_{*j}$ refers to the $j$th column of the matrix $\frac{\partial w}{\partial w^*}$, then
\[
\begin{split}
\nabla_w &\partial_j\Psi(w)= 2 g_{*j}(w) + \nabla_w \left[ (w^* - y)^T \partial_j g^{-1}(w) (w^*-y)\right]\\
&= 2 g_{*j}(w) + 2g(w) \partial_j g^{-1}(w) (w^*-y) +  (w^* - y)^T \nabla_w \partial_j g^{-1}(w) (w^*-y).
\end{split}
\]
In particular, $2H=\nabla^2_w \Psi(w_0)=2g(y_*)$. Thus $H=\frac{\partial y}{\partial y_*}$, which by assumption is positive definite. 

Our nontrivial observation is that all third derivatives of $\Psi$ vanish at $w_0=y_*$. To see this, let us compute the partial derivative of the $(i,j)$th element of the Hessian at $w_0=y_*$. Note that any term that contains a factor of $(w^*-y)$ will vanish when evaluated at $w=w_0$. For example, the derivative of the final term $(w^*-y)^T\nabla^2_w g^{-1}(w) (w^*-y)$ vanishes at $w_0$. Thus, by ignoring that term,
\[
\partial^3_{ijk}\Psi(w)\mid_{w_0}=2 u_{ijk}(y_*) + 2 g_{il}g_{mk} \partial_j g^{ml}(y_*). 
\]
Now use the matrix identity 
\begin{equation}\label{eq:derivinverse}
\partial_j g^{ml}= -g^{mp}g^{lq} \partial_j g_{pq}.
\end{equation}
Thus 
\[
\begin{split}
\partial^3_{ijk}\Psi(w)\mid_{w_0}&=2 u_{ijk}(y_*) - 2 g_{il}g_{mk} g^{mp}g^{lq} u_{jpq}(y_*)\\ 
&=2 u_{ijk}(y_*) - 2 g_{il} g^{lq} g_{mk} g^{mp} u_{jpq}(y_*)\\
&=2 u_{ijk}(y_*) - 2\delta_{iq}\delta_{kp} u_{jpq}(y_*)= 0.
\end{split}
\]
Thus all the third derivatives of $\Psi$ vanishes at $w_0$, as claimed.

For a $C^1$ function $\phi$, consider a probability density 
\[
   p_\eps(w) = \frac{1}{C_\epsilon}\exp\left( -\frac{1}{ 2\eps} \Psi(w) - \phi(w) \right),
   \]
where $C_\epsilon$ is the normalizing constant (also depends on $y$ but suppressed from the notation). 

In the remainder of this section we freely use the notations $O(\eps^k)$ and $o(\eps^k)$, for some $k \in \mathbb{N}$, without specifying the constants. This is allowed by our assumption that all derivatives of relevant quantities are bounded above by positive constants. Thus, the only dependence we track is the magnitude of their dependence on $\eps$.  

For example, we are interested in computing the first two moments under $p_\eps$ up to a an error of $o(\eps)$. This is achieved by the following lemmas. 

\begin{lemma}\label{lem:gennormconstant}
 Assume that all second derivatives of $\phi$ are uniformly bounded and that $\inf_x \phi(x) > -\infty$. Then, with $H:= \frac{\partial y}{\partial y_*}$, 
$$
 \lim_{\eps \rightarrow 0} \eps^{-d/2} C_\eps =  e^{-\phi(w_0)}\left[\det (H) \right]^{-1/2}.
$$

\end{lemma}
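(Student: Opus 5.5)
The plan is a standard Laplace-method argument: rescale around the minimiser $w_0=y_*$ of $\Psi$ at scale $\sqrt{\eps}$, identify the pointwise limit of the integrand, and pass to the limit by dominated convergence. Concretely, write
\[
C_\eps=\int \exp\Bigl(-\tfrac{1}{2\eps}\Psi(w)-\phi(w)\Bigr)\,dw ,
\]
and substitute $w=w_0+\sqrt{\eps}\,z$, so $dw=\eps^{d/2}dz$. This gives
\[
\eps^{-d/2}C_\eps=\int \exp\Bigl(-\tfrac{1}{2\eps}\Psi(w_0+\sqrt{\eps}z)-\phi(w_0+\sqrt{\eps}z)\Bigr)\,dz .
\]

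\emph{Pointwise limit.} We know $\Psi(w_0)=0$, $\nabla\Psi(w_0)=0$ and $\nabla^2\Psi(w_0)=2H$ with $H=g(y_*)$. Taylor's theorem then gives $\Psi(w_0+\sqrt{\eps}z)=\eps\, z^THz+O(\eps^{3/2}\norm{z}^3)$ for each fixed $z$. (In fact the remainder is $O(\eps^2\norm{z}^4)$, since the third derivatives vanish at $w_0$; this refinement is not needed here and matters only for the moment lemmas that follow.) Hence $\frac{1}{2\eps}\Psi(w_0+\sqrt{\eps}z)\to\frac12 z^THz$. By continuity of $\phi$, $\phi(w_0+\sqrt{\eps}z)\to\phi(w_0)$. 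So the integrand converges pointwise to $e^{-\phi(w_0)}e^{-z^THz/2}$.

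\emph{Domination.} This is the step that needs care, since Taylor control is only local and we need a bound uniform in $\eps$ on all of $\rr^d$. I will get a global quadratic lower bound for $\Psi$ directly from Assumption \ref{asmp:MCconv}(ii). Since $g^{-1}(w)\succcurlyeq C_0^{-1}I$, we have $\Psi(w)\ge C_0^{-1}\norm{w^*-y}^2=C_0^{-1}\norm{\nabla u(w)-\nabla u(w_0)}^2$. Because $\nabla^2u\succcurlyeq c_0 I$, the map $\nabla u$ is strongly monotone, so $\norm{\nabla u(w)-\nabla u(w_0)}\ge c_0\norm{w-w_0}$. Combining,
\[
\Psi(w)\ge \kappa\norm{w-w_0}^2,\qquad \kappa:=c_0^2/C_0 .
\]
Therefore $\frac{1}{2\eps}\Psi(w_0+\sqrt{\eps}z)\ge\frac{\kappa}{2}\norm{z}^2$ for every $\eps$. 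Together with $\inf\phi>-\infty$, the integrand is bounded by $e^{-\inf\phi}\,e^{-\kappa\norm{z}^2/2}$, which is integrable and independent of $\eps$.

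\emph{Conclusion.} Dominated convergence yields
\[
\lim_{\eps\to0}\eps^{-d/2}C_\eps=e^{-\phi(w_0)}\int e^{-z^THz/2}\,dz=(2\pi)^{d/2}e^{-\phi(w_0)}[\det H]^{-1/2}.
\]
This is the stated limit up to the Gaussian factor $(2\pi)^{d/2}$, which the statement apparently suppresses (or absorbs into the normalisation convention for $C_\eps$). I would flag this factor explicitly. It cancels in all subsequent moment computations, since those involve only ratios of such integrals.
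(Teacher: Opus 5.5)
Your proof is correct. It reaches the limit by a different mechanism than the paper's proof.

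\textbf{The paper's route.} The paper splits $\rr^d$ into the ball $B_{\alpha\sqrt{\eps}}(y_*)$ and its complement. It discards the complement using $\Psi_\eps(w)>m\norm{w^*-y}^2-2\eps M$. Inside the ball it expands $\Psi$ with an $O(\norm{w-w_0}^4)$ remainder (using $\nabla^3\Psi(w_0)=0$) and expands $\phi$ to first order. It then removes the $\sqrt{\eps}\,z\cdot\nabla\phi(y_*)$ correction by parity, and finally lets $\eps\downarrow 0$ before enlarging the ball.

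\textbf{Your route.} You rescale globally and apply dominated convergence. The envelope comes from the global bound $\Psi(w)\ge (c_0^2/C_0)\norm{w-w_0}^2$. This needs only a second-order expansion at $w_0$ and continuity of $\phi$; no vanishing third derivatives and no parity argument. It also makes explicit a step the paper's tail estimate skips: passing from a lower bound in $\norm{w^*-y}$ to one in $\norm{w-y_*}$. That step is exactly where $\nabla^2u\succcurlyeq c_0 I$ enters. The paper states only the other half of Assumption~\ref{asmp:MCconv}(ii), in the form $\nabla^2u^*\succcurlyeq mI$. As a by-product, your envelope gives the non-asymptotic bound $\eps^{-d/2}C_\eps\le e^{-\inf\phi}(2\pi/\kappa)^{d/2}$ for every $\eps$ and $y$.

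\textbf{What the paper's route buys.} Its organization tracks the size of the corrections: the $O(\eps)$ term inside the ball and the order-$\sqrt{\eps}$ term that cancels by symmetry. That is the template Lemmas~\ref{lem:genfirstmoment} and~\ref{lem:gensecondmoment} reuse, with a growing radius $\alpha(\eps)=\log(1/\eps)$. There the $\sqrt{\eps}$-order term no longer integrates to zero and produces the drift $-\eps H^{-1}\nabla\phi(w_0)$. For the lemma as stated, which is a pure limit, your argument is shorter and tighter.

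\textbf{The constant.} Your flag is right. The paper's own proof arrives at $(2\pi)^{d/2}[\det H]^{-1/2}$. The next lemma then uses $C_\eps=e^{-\phi(w_0)}(2\pi\eps)^{d/2}(\det H)^{-1/2}(1+o(1))$. So the missing $(2\pi)^{d/2}$ in the displayed statement is a typo.
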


\begin{proof}
    Obviously,
    \[
    C_\epsilon = \int_{\rr^d} \exp\left( -\frac{1}{2\eps} \Psi(w) - \phi(w) \right) dw= \int_{\rr^d} \exp\left( -\frac{1}{2\eps} \Psi_\eps(w) \right) dw,
    \]
    where $\Psi_\epsilon(w)=\Psi(w) + 2\epsilon \phi(w)$.
    We claim that the integral has an exponentially small mass beyond a compact set. 

     Since, by assumption, $\inf\phi > - M$ and $\inf \nabla^2 u^* \ge m I$, for some $m, M > 0$,
    \[
        \Psi_\eps(w) > m \norm{w^* - y}^2 - 2\eps M. 
    \]
    The choices of $m$ and $M$ are independent of $y$. Hence, for any $c >0$, there exists a radius $\alpha \sqrt{\eps} > 0$ such that, for all $w \notin B_{\alpha \sqrt{\epsilon}}(y_*)$ and all $\eps$ small enough, 
    \begin{equation}\label{eq:expsmallballcomp}
        \eps^{-d/2}\int_{w \notin B_{\alpha \sqrt{\epsilon}}(y_*)} \exp\left( -\frac{1}{2\eps} \Psi_\eps(w)\right)dw < e^{-c}.  
    \end{equation}

    Now, inside the ball $B_{\alpha \sqrt{\eps}}(y_*)$ do a Taylor expansion of $\Psi_\eps(w)$ in terms of the variable $z=(w - y_*)/\sqrt{\eps}$. 

    
    First consider the function $\Psi$. Recall that $w_0=y_*$ is the unique minimizer of $\Psi$. We have already shown $\Psi(w_0)=0, \nabla \Psi(w_0)=0$, $\nabla^2 \Psi(w_0)=2H$, $\nabla^3 \Psi(w_0)=0$ and $\Psi$ has bounded fourth derivatives. Expanding $\Psi$ by a third order Taylor expansion around $w_0=y_*$ gives 
    \begin{equation}\label{eq:expandPsi}
    \begin{split}
    \Psi(w)&=(w-w_0)^T H (w-w_0)  + O(\norm{w-w_0}^4)\\
    &=\eps z^T H z + \eps^2 O(\norm{z}^4)= \eps z^T H z + O(\eps^2), 
    \end{split}
    \end{equation}
    since $\norm{z}\le \alpha$ for $w\in B_{\alpha \sqrt{\eps}}(w_0)$.
    For the lower order term $2\eps \phi$, it suffices to only consider the first order Taylor approximations and use the assumption that the second derivatives are uniformly bounded.  Then $\phi(w) = \phi(y_*) + \sqrt{\eps} z\cdot \nabla \phi(y_*)+ O(\eps)$.


    This gives us $\Psi_\eps(w)$ is equal to
    \begin{equation}\label{eq:psieps3} 
    \eps z^T H z 
    + 2\eps \phi(y_*)  
    + 2\eps^{3/2} z \cdot \nabla \phi(y_*) + O(\eps^2).
    \end{equation}
    
    Hence, combining everything,
    \[
    \begin{split}
        \int_{B_{\alpha \sqrt{\epsilon}}(y_*)} &\exp\left( -\frac{1}{2\eps} \Psi_\eps(w)\right)dw= e^{-\phi(y_*)} \eps^{d/2} \int_{B(0, \alpha)} \exp\left(- \frac{1}{2}z^T H z  \right) \mathcal{R}_\eps(z)dz,
    \end{split}
    \]
    where
    \begin{equation}\label{eq:whatisreps}
    \begin{split}
    \mathcal{R}_\eps(z)&= \exp\left( -\frac{1}{2\eps}\Psi(w) + \frac{1}{2}z^T Hz -  \left( \phi(w) - \phi(y_*)\right)\right)\\
    &=\exp\left( -\eps^{1/2} z \cdot \nabla_w \phi(y_*) + O(\eps) \right).
    \end{split}
    \end{equation}

    Since we are integrating over a bounded ball, we can approximate the exponential by a first order Taylor expansion,
    \begin{equation}\label{eq:taylorreps}
    \mathcal{R}_\eps(z) = 1 - \eps^{1/2} z \cdot \nabla_w \phi(y_*) +  O(\eps).
    \end{equation}
    Since the linear function is odd, its integral over the ball with respect to the even function $\exp\left(-\frac{1}{2}z^T H z  \right)$ is exactly zero. Hence,
    \[
     \int_{B_{\alpha \sqrt{\epsilon}}(y_*)} \exp\left( -\frac{1}{2\eps} \Psi_\eps(w)\right)dw= (1+ O(\eps)) e^{-\phi(y_*)} \eps^{d/2} \int_{B(0, \alpha)} \exp\left(-\frac{1}{2}z^T H z  \right) dz
    \]
    
    Adding back the integral over the complement of the ball, we get that $e^{2\phi(y_*)}\eps^{-d/2}C_\eps$   
    is bounded below by
    \[
    (1+ O(\eps))\int_{B(0, \alpha)} \exp\left(-\frac{1}{2}z^T H z  \right)dz
    \]
    and bounded above by the above plus $e^{-c}$. Take $\eps \downarrow 0$ and then $c\uparrow \infty$, to obtain 
    \[
    \begin{split}
    \lim_{\eps \rightarrow 0} e^{\phi(y_*)}\eps^{-d/2}\int \exp\left( -\frac{1}{2\eps} \Psi_\eps(w)\right)dw &= (2\pi)^{d/2} \left[\det H\right]^{-1/2}.
    \end{split}
    \]
    That is, $\lim_{\eps \rightarrow 0} \eps^{-d/2} C_\eps = e^{-\phi(w_0)}\left[\det  H\right]^{-1/2}$, since $w_0=y_*$. 
\end{proof}

The next lemma computes the first moment of $p_\epsilon$ as $\epsilon \rightarrow 0$. Recall the notion of the Riemannian gradient $\nabla_g$ from Section \ref{sec:hessian_prelim}. 

\begin{lemma}\label{lem:genfirstmoment}
    Under the conditions of Lemma \ref{lem:gennormconstant}, 
    \[
    \lim_{\epsilon \rightarrow 0+} \frac{1}{\epsilon}\left(\int w p_\epsilon(w)dw - w_0\right)= - \nabla_g \phi(w_0). 
    \]
    In other words, $\int w p_\epsilon(w)dw= w_0 - \epsilon \nabla_g \phi(w_0) + o(\epsilon)$.
\end{lemma}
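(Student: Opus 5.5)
The plan is to run the same Laplace expansion as in the proof of Lemma~\ref{lem:gennormconstant}, but now for the first moment and one order further. Write $w=w_0+\sqrt{\eps}\,z$ with $w_0=y_*$, so that
\[
\int w\,p_\eps(w)\,dw - w_0=\sqrt{\eps}\,\frac{\int z\, e^{-\frac{1}{2\eps}\Psi(w_0+\sqrt\eps z)-\phi(w_0+\sqrt\eps z)}\,dz}{\int e^{-\frac{1}{2\eps}\Psi(w_0+\sqrt\eps z)-\phi(w_0+\sqrt\eps z)}\,dz}.
\]
It therefore suffices to show that the normalized $z$-mean equals $-\sqrt{\eps}\,H^{-1}\nabla\phi(w_0)+o(\sqrt\eps)$. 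Since $H=g(w_0)$, we have $H^{-1}\nabla\phi(w_0)=\nabla_g\phi(w_0)$, which gives the claim.

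\textbf{Step 1: tails.} By Assumption~\ref{asmp:MCconv}(ii), $\Psi(w)\ge m\norm{w^*-w_0^*}^2\ge m c_0^2\norm{w-w_0}^2$, since $w\mapsto w^*$ has Jacobian $\succcurlyeq c_0 I$. Together with $\phi\ge -M$, this bounds the integrand by $e^{M}e^{-c\norm{z}^2}$ uniformly in $\eps$. I will cut at $\norm{z}\le \eps^{-\delta}$ for a small $\delta>0$, say $\delta=1/10$, instead of a fixed radius $\alpha$. Then both $\int(1+\norm{z})e^{-c\norm z^2}$ over the complement and the resulting contribution to the mean are $O(e^{-c\eps^{-2\delta}})=o(\eps)$. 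The same bound also controls the normalizing constant.

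\textbf{Step 2: Taylor expansion on the ball.} This is the main point. On $\norm z\le\eps^{-\delta}$, use $\Psi(w_0)=0$, $\nabla\Psi(w_0)=0$, $\nabla^2\Psi(w_0)=2H$ and, crucially, $\nabla^3\Psi(w_0)=0$ (shown above), together with bounded fourth derivatives. This gives
\[
\frac{1}{2\eps}\Psi=\tfrac12 z^THz+O(\eps\norm z^4).
\]
Similarly, bounded second derivatives of $\phi$ give
\[
\phi=\phi(w_0)+\sqrt\eps\,z\cdot\nabla\phi(w_0)+O(\eps\norm z^2).
\]
The integrand is therefore $e^{-\phi(w_0)}e^{-\frac12 z^THz}\big(1-\sqrt\eps\, z\cdot\nabla\phi(w_0)+\mathcal{E}_\eps(z)\big)$. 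On the ball the exponent is $a=O(\eps^{1/2-\delta}+\eps^{1-4\delta})\to0$, so the inequality $\abs{e^{a}-1-a}\le a^2e^{\abs a}$ bounds the remainder by $\abs{\mathcal{E}_\eps(z)}\le C\eps(1+\norm z^4)$.

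Without the vanishing third derivatives there would be an odd cubic term $\eps^{1/2}\nabla^3\Psi[z,z,z]$ of the same order as the $\phi$-gradient term. That term would pollute the drift, which is why $\nabla^3\Psi(w_0)=0$ is the essential input.

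\textbf{Step 3: Gaussian moments.} Integrate against the Gaussian $e^{-\frac12 z^THz}$, extending from the ball to $\rr^d$ at exponentially small cost. The leading term has zero mean by symmetry. The correction term gives
\[
-\sqrt\eps\int z\,(z\cdot\nabla\phi)\,e^{-\frac12 z^THz}\,dz=-\sqrt\eps\,(2\pi)^{d/2}(\det H)^{-1/2}H^{-1}\nabla\phi(w_0).
\]
The remainder contributes $O(\eps)$, since Gaussian polynomial moments are finite. The denominator is $(2\pi)^{d/2}(\det H)^{-1/2}(1+O(\eps))$, as in Lemma~\ref{lem:gennormconstant}.

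Dividing, the $z$-mean is $-\sqrt\eps H^{-1}\nabla\phi(w_0)+O(\eps)$. Multiplying by $\sqrt\eps$ gives $\int w\,p_\eps-w_0=-\eps\nabla_g\phi(w_0)+O(\eps^{3/2})$, which proves the lemma.

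All constants depend only on the uniform derivative bounds, so the estimate is uniform in $y$. This uniformity is what Steps 1–3 of Theorem~\ref{thm:mcconv} will later need.
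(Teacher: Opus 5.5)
Your proposal is correct and follows essentially the same route as the paper: a Laplace expansion in $z=(w-w_0)/\sqrt{\eps}$ on a slowly growing ball (you take $\norm{z}\le\eps^{-\delta}$, the paper takes $\norm{z}\le\log(1/\eps)$), with $\nabla^3\Psi(w_0)=0$ as the key input and Gaussian second moments producing $-\eps H^{-1}\nabla\phi(w_0)=-\eps\nabla_g\phi(w_0)$. Your remainder bookkeeping, $\abs{\mathcal{E}_\eps(z)}\le C\eps(1+\norm{z}^4)$, is more careful than the paper's; one small caveat concerns your closing uniformity remark. Your constants also depend on $\norm{\nabla\phi(w_0)}$ and $\phi(w_0)$, and Assumption~\ref{asmp:MCconv} bounds only the second derivatives of $V$, not the first, so the estimate is uniform for $y$ in compact sets rather than on all of $\rr^d$.
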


\begin{proof} We follow the notations and ideas in the proof of Lemma \ref{lem:gennormconstant}. Obviously
\[
\int w p_\eps(w) dw = \frac{1}{C_\eps} \int w \exp\left( -\frac{1}{2\eps} \Psi_\epsilon(w \right) dw,
\]
where, as before, $C_\eps$ is the normalizing constant and $\Psi_\epsilon=\Psi + 2\epsilon \phi$. 

As in the proof of Lemma \ref{lem:gennormconstant}, for any $c>0$, depending on $\epsilon$, there exists a function $\alpha= \alpha(\eps):=\log(1/\eps)$ such that, for all small enough $\eps$,
\[
\eps^{-d/2}\int_{w\notin B_{\alpha \sqrt{\epsilon}}(w_0)}  \norm{w} \exp\left( -\frac{1}{ 2\eps} \Psi_\epsilon(w \right) dw < \epsilon e^{-c}. 
\]
Ignore the integral outside the ball $B_{\alpha \sqrt{\epsilon}}(w_0)$. Inside the ball, let $z=(w-w_0)/\sqrt{\eps}$. Then, 
\[
\begin{split}
\int w p_\eps(w) dw &= w_0 + \frac{\eps^{1/2}}{C_\eps} \int z \exp\left( -\frac{1}{2\eps} \Psi_\epsilon(w) \right) dw\\
&\le w_0 + \frac{\eps^{1/2}}{C_\eps} \int_{B_{\alpha \sqrt{\epsilon}}(w_0)} z \exp\left( -\frac{1}{2 \eps} \Psi_\epsilon(w) \right) dw + \epsilon^{d/2+1} e^{-c}.
\end{split}
\]
By the same logic, 
\[
\int w p_\eps(w) dw  \ge  w_0 + \frac{\eps^{1/2}}{C_\eps} \int_{B_{\alpha \sqrt{\epsilon}}(w_0)} z \exp\left( -\frac{1}{ 2\eps} \Psi_\epsilon(w) \right) dw - \epsilon^{d/2+1} e^{-c}.
\]
Since $c$ can be taken arbitrarily large, we can restrict ourselves in estimating the integral inside the vanishingly small ball $B(w_0,  \sqrt{\epsilon} \log(1/\eps))$. 

By Lemma \ref{lem:gennormconstant}, $C_\eps = e^{-\phi(w_0)}(2\pi\eps)^{d/2} \left(\det  H\right)^{-1/2}\left(1 + o(1) \right)$. Substitute the leading term for $C_\eps$. The error due to the $o(1)$ is negligible. Hence, our objective is to show that
\[
\frac{\eps^{1/2} e^{\phi(w_0)} \left(\det H\right)^{1/2}}{(2\pi \eps)^{d/2} } \int_{B(w_0, \alpha \sqrt{\epsilon})} z \exp\left( -\frac{1}{2\eps} \Psi_\epsilon(w) \right) dw = - \epsilon \nabla_g \phi(w_0) + o(\epsilon). 
\]

From \eqref{eq:whatisreps} one may write
\[
\begin{split}
e^{-\frac{1}{2\eps} \Psi(w)}&= \mathcal{R}_\eps(w) e^{- \frac{1}{2}z^T H z} e^{\phi(w) - \phi(y_*)}.\\
\text{I.e,}\; e^{ \phi(y_*)}e^{-\frac{1}{2\eps} \Psi_\eps(w)}&= \mathcal{R}_\eps(w) e^{- \frac{1}{2}z^T H z}.
\end{split}
\]
Since $w_0=y_*$,
\[
\begin{split}
&\frac{\eps^{1/2} e^{\phi(w_0)} \left(\det H\right)^{1/2}}{(2\pi \eps)^{d/2} } \int_{B_{ \alpha \sqrt{\epsilon}}(w_0)} z \exp\left( -\frac{1}{2\eps} \Psi_\epsilon(w) \right) dw = \\
&  \frac{\eps^{1/2}(\det  H)^{1/2}}{(2\pi)^{d/2}} \int_{B_{\alpha}(0)} z \exp\left( -  \frac{1}{2}z^T Hz\right)\mathcal{R}_\eps(w_0+ \sqrt{\eps}z)dz.
\end{split}
\]

Consider the integral
\[
\frac{(\det  H)^{1/2}}{(2\pi)^{d/2}} \int_{B_{\alpha}(0)} z \exp\left( - \frac{1}{2}z^T Hz\right)\mathcal{R}_\eps(w_0+ \sqrt{\eps}z)dz.
\]
As before, approximating $\mathcal{R}_\eps$ inside this ball by 
\[
\mathcal{R}_\eps(z) = 1 - \sqrt{\eps} z\cdot \nabla \phi(y_*) + O(\eps), 
\]
gives 
\[
\begin{split}
&\frac{(\det  H)^{1/2}}{(2\pi)^{d/2}} \int_{B_{\alpha}(0)} z \exp\left( - \frac{1}{2}z^T Hz\right)\mathcal{R}_\eps(w_0+ \sqrt{\eps}z)dz\\
&= \frac{(\det H)^{1/2}}{(2\pi)^{d/2}} \int_{B_{\alpha}(0)} z \exp\left( - \frac12 z^T Hz\right)dz\\
& - \sqrt{\eps} \frac{(\det H)^{1/2}}{(2\pi)^{d/2}} \int_{B_{\alpha}(0)} z z^T  \nabla \phi(y_*) \exp\left( - \frac12 z^T Hz\right)dz\\
&+ \eps \frac{(\det  H)^{1/2}}{(2\pi)^{d/2}} \int_{B_{\alpha}(0)} O(\norm{z}^3) \exp\left( - \frac12 z^T Hz\right)dz. 
\end{split}
\]
Here the $O(\norm{z}^3)$ is due to teh assumption that $\phi$ has all bounded second derivatives. 

Now, as $\eps \downarrow 0$, $\alpha(\eps) = \log(1/\eps) \uparrow \infty$. Thus the above integrals over the ball of radius $\alpha$ may be approximated by the full Gaussian integral. This gives us an RHS $ - \eps^{1/2} (H)^{-1} \nabla \phi(y_*) + O(\eps)$.

Combining all our previous steps, 
\[
\begin{split}
\int wp_\eps(w)dw &= w_0 - \eps (H)^{-1}\nabla \phi(w_0) + o(\eps)\\
&= w_0 - \eps g^{-1}(y_*) \nabla \phi(y_*) + o(\eps)= w_0 - \eps \nabla_g \phi(y_*) + o(\eps),
\end{split}
\]
proving our claim.
\end{proof}

\begin{lemma}\label{lem:gensecondmoment}
Let $\Cov(p_\eps)$ denote the covariance matrix under $p_\eps$. Then, under the conditions of Lemma \ref{lem:gennormconstant}, 
    \[
    \lim_{\epsilon \rightarrow 0+} \frac{1}{\epsilon}\Cov(p_\eps)= (H)^{-1}. 
    \]
    In other words, $\Cov(p_\eps)= \eps(H)^{-1} + o(\epsilon)$.
\end{lemma}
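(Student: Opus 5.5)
We follow the same Laplace-approximation scheme as in the proofs of Lemmas \ref{lem:gennormconstant} and \ref{lem:genfirstmoment}. Write
\[
\Cov(p_\eps)=\int (w-w_0)(w-w_0)^T p_\eps(w)\,dw - (m_\eps - w_0)(m_\eps-w_0)^T,
\]
where $m_\eps=\int w p_\eps(w)dw$. By Lemma \ref{lem:genfirstmoment}, $m_\eps - w_0 = O(\eps)$, so the second term is $O(\eps^2)=o(\eps)$. It therefore suffices to show that the second moment about $w_0$ equals $\eps H^{-1}+o(\eps)$.

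First we localize. Take the ball $B_{\alpha\sqrt\eps}(w_0)$ with $\alpha=\alpha(\eps)=\log(1/\eps)$, as in Lemma \ref{lem:genfirstmoment}. The bound $\Psi_\eps(w)\ge m\norm{w^*-y}^2-2\eps M$, combined with the bi-Lipschitz property of $w\mapsto w^*$ (Assumption \ref{asmp:MCconv}(ii)), gives Gaussian tails in $\norm{w-w_0}/\sqrt\eps$. Hence $\eps^{-d/2}\int_{B^c}\norm{w-w_0}^2 e^{-\Psi_\eps/(2\eps)}dw$ is smaller than any power of $\eps$; in particular it is $o(\eps^{1+d/2})/\eps^{d/2}$ after dividing by $C_\eps\asymp\eps^{d/2}$ (Lemma \ref{lem:gennormconstant}).

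Inside the ball, substitute $w=w_0+\sqrt\eps z$. This gives $(w-w_0)(w-w_0)^T=\eps zz^T$ and $dw=\eps^{d/2}dz$, and by \eqref{eq:whatisreps},
\[
e^{\phi(w_0)}e^{-\Psi_\eps(w)/(2\eps)}=e^{-\frac12 z^THz}\mathcal{R}_\eps(w_0+\sqrt\eps z).
\]
Using \eqref{eq:taylorreps}, $\mathcal{R}_\eps=1-\sqrt\eps\, z\cdot\nabla\phi(w_0)+O(\eps\,\mathrm{poly}(\norm z))$. We plug this in together with $C_\eps=e^{-\phi(w_0)}(2\pi\eps)^{d/2}(\det H)^{-1/2}(1+o(1))$. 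The leading term is
\[
\eps\,\frac{(\det H)^{1/2}}{(2\pi)^{d/2}}\int_{B_\alpha(0)} zz^T e^{-\frac12 z^THz}dz \;\to\; \eps H^{-1},
\]
since $\alpha\to\infty$ and the truncated Gaussian second moment converges to the full one. The $\sqrt\eps$ correction involves the third moment $\int z z^T(z\cdot\nabla\phi)e^{-z^THz/2}$ over a symmetric ball, which vanishes by oddness. The remaining error is $\eps\cdot O(\eps)\int \mathrm{poly}(\norm z)e^{-z^THz/2}=O(\eps^2)$.

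The main obstacle is controlling the Taylor remainders uniformly on a ball whose radius $\alpha(\eps)\sqrt\eps$ has $\alpha$ growing. In \eqref{eq:expandPsi} the error is $O(\eps^2\norm z^4)$, not $O(\eps^2)$, and for $\phi$ it is $O(\eps\norm z^2)$. For $\norm z\le\log(1/\eps)$ these remain $O(\eps\log^4(1/\eps))\to0$ inside the exponent. I would therefore bound $\abs{\mathcal{R}_\eps-1+\sqrt\eps z\cdot\nabla\phi}\le C\eps(1+\norm z^4)e^{C\sqrt\eps\norm z}$ using $\abs{e^t-1-t}\le t^2e^{\abs t}$. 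The factor $e^{C\sqrt\eps\norm z}$ is absorbed into the Gaussian, so the error integrates to $O(\eps)$ relative to the main term. Here the vanishing third derivatives of $\Psi$ are essential: without them a term of size $\sqrt\eps\norm z^3$ would appear, and its square would produce an $O(\eps)$ correction to the covariance.
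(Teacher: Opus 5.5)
Your argument is correct and is the same Laplace-approximation scheme the paper intends; the paper only says the proof is a simpler variant of that of Lemma~\ref{lem:genfirstmoment} and omits the details. Your control of the $\eps\|z\|^4$ and $\eps\|z\|^2$ remainders on the growing ball $\|z\|\le\log(1/\eps)$ is in fact more careful than the paper's.

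One small correction to your closing remark: the vanishing of $\nabla^3\Psi(w_0)$ is not needed for this lemma. A cubic term $\sqrt\eps\,\nabla^3\Psi(w_0)[z,z,z]$ in the exponent is odd, so it integrates to zero against $zz^T$. Its square enters $\mathcal{R}_\eps$ only at order $\eps$, so it changes the covariance by $O(\eps^2)$, not $O(\eps)$. That cancellation is essential for the drift in Lemma~\ref{lem:genfirstmoment}, not for the covariance.
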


\begin{proof}
    This follows from a similar but simpler argument as in Lemma \ref{lem:genfirstmoment}. We skip the details.
\end{proof}

\begin{proof}[Proof of Theorem \ref{thm:mcconv} Steps 1 and 2]
    The density
    $\hat{q}_\epsilon(\cdot \mid y)$ is given by
    \[
    \begin{split}
        \hat{q}_\eps(w \mid y) & =  \frac{1}{C_\eps(y)} \exp\left( -\frac{1}{2\eps} \Psi(w) - V(w) -\frac{1}{2} \log \abs{\frac{\partial w^*}{\partial w}} \right) \\
        &= \frac{1}{C_\eps(y)}\exp\left( - \frac{1}{2\eps} \Psi(w) - \phi(w) \right)
    \end{split}
    \]
    where
    \begin{equation}\label{eq:psieps2}
    \phi(w)=  V(w) + \frac{1}{2} \log \det \frac{\partial w^*}{\partial w}= V(w) + G(w).
    \end{equation}
    The assumptions in Lemma \ref{lem:gennormconstant} are satisfied by Assumption \ref{asmp:MCconv}.

    Thus, by Lemmas \ref{lem:gennormconstant}, \ref{lem:genfirstmoment} and \ref{lem:gensecondmoment}, we can now estimate the first two moments under $\hat{q}_\eps$. 
    \[
    \int w \hat{q}_\eps(w\mid y)dw = y_* - \epsilon\nabla_g\left( V + G \right)(y_*) + o(\eps),
    \]
    and $\Cov(\hat{q}_\eps(\cdot \mid y))= \eps\frac{\partial y_*}{\partial y} + o(\eps)$. 

    We will now compute the first two moments under $r_\eps(\cdot x)$ by what is sometimes called the delta method. Recall that if $Y \sim N\left(x^*, \eps \frac{\partial x^*}{\partial x}\right)$ and $Z$, given $Y=y$, has density $\hat{q}_\eps(\cdot \mid y)$, then the unconditional density of $Z$ is $r_\eps(\cdot \mid x)$. We will compute $\E(Z)$ and $\Cov(Z)$. By the tower property,
\[
\begin{split}
    \E(Z)&= \E\left[ \E(Z \mid Y) \right] = \E\left[ Y_* - \epsilon \nabla_g(V + G)(Y_*) \right] + o(\eps).
\end{split}
\]
We now estimate the RHS up to an $o(\eps)$ error by a first order Taylor approximation to $y \mapsto y_*=\nabla u^*(y)$ around $x^*$. Using the assumption that $u^*$ has all bounded fourth derivatives,
\begin{equation}\label{eq:taylorstar}
\begin{split}
    \nabla u^*(y)= x + \nabla^2 u^*(x^*) (y - x^*) + \frac{1}{2}(y-x^*)^T \nabla^3 u^*(x^*) (y-x^*) + O(\norm{y-x^*}^3).
\end{split}
\end{equation}
Note that $\nabla^3 u^*(x^*)$ is a third order tensor and the multiplication with the two vectors on either side outputs a vector.

Applying this to $Y \sim N\left(x^*, \eps \frac{\partial x^*}{\partial x}\right)$, for any $k\in [d]$, $\E((Y_*)_k)=$
\[
\begin{split}
& x_k + \frac{1}{2}\sum_i \sum_j \frac{\partial^2 x_k}{\partial x^*_i \partial x^*_j} \E\left[(Y_i - x^*_i)(Y_j - x^*_j)\right] + o(\eps)  \\
&= x_k + \frac{\eps}{2} \left( \frac{\partial}{\partial x_i^*} g^{kj} \right) g_{ij}
+ o(\eps)
= x_k + \frac{\eps}{2} g_{ij} g^{il} \frac{\partial}{\partial x_l} g^{kj} + o(\eps)\\
&= x_k - \frac{\eps}{2} g_{ij} g^{il} g^{kp}g^{jq} u_{lpq} + o(\eps), \quad \text{by \eqref{eq:derivinverse}}, 
\\
&= x_k - \frac{\eps}{2} \delta_{jl} g^{kp}g^{jq} u_{lpq} + o(\eps)=x_k - \frac{\eps}{2} g^{kp}g^{jq} u_{jpq} + o(\eps)\\
&= x_k - \eps\left(\nabla_g G(x)\right)_k + o(\eps), \quad \text{by \eqref{eq:gradG}}.
\end{split}
\]

The remaining terms may be expanded as
\[
\begin{split}
-\epsilon \E\left[  \nabla_g(V + G)(Y_*) \right]= -\epsilon \left[  \nabla_g(V + G)(x) \right] + o(\eps).  
\end{split}
\]
Adding all the terms together we get 
\[
\begin{split}
\E(Z) &= x - \epsilon\nabla_g\left[ V +2G\right]+ o(\epsilon)= x - \epsilon \frac{\partial F}{\partial x^*}(x^*) + o(\eps),\quad \text{by \eqref{eq:jacobian}.}
\end{split}
\]

Finally, let us estimate $\Cov(r_\eps)$. Since $Z \sim r_\eps(\cdot \mid x)$, by an abuse of notation,
\[
\begin{split}
\Cov(Z)&= \E \Cov(Z \mid Y) + \Cov\left( \E(Z \mid Y) \right)\\
&= \epsilon \E \left[\nabla^2 u^*(Y)\right] + \Cov(Y_*) + o(\epsilon) 
= \epsilon \frac{\partial x}{\partial x^*} +  \Cov(Y_*) + o(\epsilon). 
\end{split}
\]
The term $\Cov(Y_*)$ may again be estimated from the Taylor expansion \eqref{eq:taylorstar}. 
\[
\Cov(Y_*)= \eps\frac{\partial x}{\partial x^*}\frac{\partial x^*}{\partial x}\frac{\partial x}{\partial x^*} + o(\eps) = \epsilon \frac{\partial x}{\partial x^*} + o(\eps). 
\]
Thus, $\Cov(Z)=2\eps\frac{\partial x}{\partial x^*} + o(\eps)$. This completes the proofs of Steps 1 and 2 used to argue Theorem \ref{thm:mcconv}.
\end{proof}    

\begin{lemma}\label{lem:fourthmoment}
    Fix $x$, and let $Z \sim r_\eps(\cdot \mid x)$. Then  $\E\norm{Z-x}^3 \le C_0 \eps^{3/2}$ and $\E \norm{Z-x}^4 \le C_0\eps^2$, for some positive constant $C_0$. 
\end{lemma}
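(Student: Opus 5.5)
We will prove Lemma \ref{lem:fourthmoment} by splitting the displacement through the auxiliary variable. Write $Z - x = (Z - Y_*) + (Y_* - x)$, where $Y \sim N\bigl(x^*, \eps \frac{\partial x^*}{\partial x}\bigr)$ and $Z$, given $Y=y$, has density $\hat{q}_\eps(\cdot \mid y)$. By the elementary inequality $\norm{a+b}^p \le 2^{p-1}(\norm{a}^p + \norm{b}^p)$ for $p=3,4$, it suffices to bound the $p$-th moments of each piece by $C\eps^{p/2}$, uniformly in the relevant variables. Moreover, the third moment bound follows from the fourth by Jensen (or Cauchy--Schwarz): $\E\norm{Z-x}^3 \le (\E\norm{Z-x}^4)^{3/4} \le C\eps^{3/2}$. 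So we focus on fourth moments.

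\textbf{The term $Y_* - x$.} Since $Y_* = \nabla u^*(Y)$ and $x = \nabla u^*(x^*)$, while $\nabla^2 u^*$ is bounded by Assumption \ref{asmp:MCconv}(ii), the map $\nabla u^*$ is Lipschitz. Hence $\norm{Y_*-x} \le C\norm{Y - x^*}$. For the Gaussian with covariance $\eps\nabla^2 u(x) \preccurlyeq \eps C_0 I$ we have $\E\norm{Y-x^*}^4 \le C\eps^2$. This bound is uniform in $x$.

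\textbf{The term $Z - Y_*$.} Conditionally on $Y=y$, $Z$ has density $p_\eps(w) \propto \exp(-\frac{1}{2\eps}\Psi(w) - \phi(w))$ with $\phi = V+G$ and $w_0 = y_*$. We need $\E[\norm{Z-y_*}^4 \mid Y=y] \le C\eps^2$ uniformly in $y$. This is the main step.

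We would redo the Laplace argument of Lemma \ref{lem:gennormconstant} with an explicit, $y$-uniform control. Three bounds are needed:
\begin{enumerate}[(a)]
\item A lower bound on the normalizing constant, $C_\eps \ge c\,\eps^{d/2} e^{-\phi(y_*)}$. To get it, restrict the integral to $B_{\sqrt\eps}(y_*)$. There $\Psi(w) \le C\norm{w-y_*}^2$, because $\Psi(w) = (w^*-y)^T\nabla^2u^*(w^*)(w^*-y)$ and $\nabla u$ is Lipschitz. Also $\phi(w) \le \phi(y_*) + C\sqrt\eps + C\eps$, since $\nabla\phi$ is bounded: $\nabla V$ is bounded on this scale by bounded second derivatives, and $\nabla G$ is bounded by Assumption \ref{asmp:MCconv}(i),(ii).
\item A Gaussian-type upper bound for the unnormalized density. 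Using $\Psi(w) \ge m\norm{w^*-y}^2 \ge m c_0^2 \norm{w - y_*}^2$, we need to control $e^{-(\phi(w)-\phi(y_*))}$. This is the delicate point, since $\phi(y_*) - \phi(w)$ could grow linearly in $\norm{w-y_*}$ with slope $\norm{\nabla\phi(y_*)}$, which is not uniformly bounded. Here we would use the quadratic bound $\phi(y_*) - \phi(w) \le \norm{\nabla\phi(y_*)}\,\norm{w-y_*} + C\norm{w-y_*}^2$. The quadratic error is absorbed into $\frac{m}{2\eps}\norm{w-y_*}^2$ for $\eps$ small. The linear term is handled by completing the square. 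With $t=\norm{w-y_*}$, this gives
\[ -\frac{m t^2}{4\eps} + a t \le -\frac{m t^2}{8\eps} + 2\eps a^2/m, \qquad a = \norm{\nabla \phi(y_*)}. \]
So the integrand is bounded by $e^{-\phi(y_*)} e^{C\eps a^2} e^{-m\norm{w-y_*}^2/(8\eps)}$.
\item Combining (a) and (b) and integrating $\norm{w-y_*}^4$ against the Gaussian bound gives $\E[\norm{Z-y_*}^4\mid Y=y] \le C\eps^2 e^{C\eps\norm{\nabla\phi(y_*)}^2}$.
\end{enumerate}

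\textbf{The remaining obstacle.} The factor $e^{C\eps\norm{\nabla\phi(y_*)}^2}$ still has to be integrated over $Y$. Since $\nabla\phi$ is Lipschitz (bounded second derivatives of $V$ and of $G$), we have $\norm{\nabla\phi(Y_*)} \le \norm{\nabla\phi(x)} + C\norm{Y-x^*}$. Then $\eps\norm{Y-x^*}^2$ is a $\chi^2$-type variable of order $\eps^2$, whose exponential moment is finite for small $\eps$. This yields the bound
\[ \E\norm{Z-x}^4 \le C\eps^2 e^{C\eps\norm{\nabla\phi(x)}^2}, \]
which is $\le C_0\eps^2$ for fixed $x$, and uniformly on compact sets $\norm{x}\le R$ (which is all Step 3 requires). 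If a fully uniform-in-$x$ constant is wanted, one would additionally need $\nabla V$ bounded; otherwise we state the constant as depending on $x$ locally uniformly, consistent with ``Fix $x$'' in the statement.

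Finally, Step 3 follows from the lemma by Markov's inequality: $P(\norm{Z-x}>\delta) \le \delta^{-4}C_0\eps^2 = o(\eps)$.
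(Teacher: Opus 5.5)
Your proposal is correct and follows the paper's route: split $Z-x=(Z-Y_*)+(Y_*-x)$, show that $\hat{q}_\eps(\cdot\mid y)$ concentrates at scale $\sqrt{\eps}$ around $y_*$, and use that $Y$ is Gaussian. It also makes explicit something the paper's ``uniformly sub-Gaussian'' claim glosses over: when $\nabla V$ is unbounded, the shift by $\eps\nabla_g\phi(y_*)$ is not uniform in $y$, so the constant is only locally uniform in $x$, which is all Step 3 needs.

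One small slip: in (a) you invoke boundedness of $\nabla\phi$, which contradicts your own remark in (b). The bound $C_\eps\ge c\,\eps^{d/2}e^{-\phi(y_*)}$ still holds, because by Jensen the average of $e^{-\nabla\phi(y_*)\cdot(w-y_*)}$ over the symmetric ball $B_{\sqrt{\eps}}(y_*)$ is at least $1$. Alternatively, absorb $e^{\sqrt{\eps}\,a}\le e^{1/2+\eps a^2/2}$ into your factor $e^{C\eps a^2}$.
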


\begin{proof}
    This is a very similar argument to the last proof. We only give an outline. 
    If we compute exponential moments of $\hat{q}_\eps$ following an extension of the proof of Lemma \ref{lem:genfirstmoment}, it shows that $\hat{q}_\eps(\cdot y)$ is uniformly sub-Gaussian with a sub-Gaussian parameter $O(\eps)$. Hence, it implies the moment bounds $\E \norm{Z-Y_*}^3=O(\eps^{3/2})$ and  $\E\norm{Z-Y_*}^4=O(\eps^2)$. Since $Y$ is itself Gaussian, the rest of the argument follows exactly as in the above proof of Theorem \ref{thm:mcconv} steps 1 and 2. 
\end{proof}

\begin{proof}[Proof of Theorem \ref{thm:mcconv} step 3.] 
Fix $x$ and let $Z \sim r_\eps(\cdot \mid x)$. By lemma \ref{lem:fourthmoment}, there is a constant $C_0$ such that $\E\norm{Z - x}^4 \le C_0 \eps^2$, uniformly for all $k$. Thus, by Markov's inequality, 
    \[
    \eps^{-1}P\left(  \norm{X_1^\epsilon - x} > \delta \right)=\eps^{-1}P\left(  \norm{Z - x} > \delta \right) \le C_0\delta^{-4} \eps. 
    \]
    Taking $\eps \downarrow 0+$ completes the argument and the proof of Theorem \ref{thm:mcconv} is now complete. 
\end{proof}

\begin{proof}[Proof of Theorem \ref{thm:mclya}] 
By the Markov property it suffices to prove that a single step in the Markov chain is a contraction in the chi-square divergence with appropriate rate. That is, we need to show tha $\chi^2(p_1^\eps \mid \mu) \le (1 - c_0 \eps) \chi^2(p_0^\eps\mid \mu)$, for all $p_0^\eps$. This sort of inequality is known as a \textit{Strong Data Processing Inequality} (SDPI) and we use functional inequalities developed in that literature \cite{Raginsky2014StrongDP} combined with some comparison techniques.

Recall from \eqref{eq:whatispi} that $\pi_\eps(x,y)= e^{-V(x)} q_\eps(x,y)$. Consider the triplet $(X,Y,Z)$ defined below Definition \ref{defn:mcdef}. In particular, $(X, Z)$ is an exchangeable pair of random variables with the same law and the conditional density of $Z$, given $X=x$, is $r_\eps(\cdot \mid x)$, which is our Markov transition density. 

Consider any function $\xi\in \mathbf{L}^2(\mu)$.
Define the discrete Dirichlet energy \cite[page 46]{Raginsky2014StrongDP} of this Markov chain by the expression
\[
\mathcal{E}(\xi):= \frac{1}{2}\E (\xi(X) - \xi(Z))^2
\]
Say that the Markov chain satisfies a Poincar\'e inequality with a constant $c> 0$ if, for all $\xi\in \mathbf{L}^2(\mu)$, 
\begin{equation}\label{eq:poincare}
\Var(\xi) \le c \mathcal{E}(\xi). 
\end{equation}
The quantity $\lambda = c^{-1}$ is frequently called the spectral gap of this reversible Markov chain. We prove below that a Poincar\'e inequality holds for this Markov chain with a constant of the order $\Theta(1/\eps)$. 

Now, suppose $X'$ has density $\mu_0$, and $Z'$, given $X'=x'$, is distributed according to density $r_\eps(\cdot \mid x')$. Let $\mu_1$ be the marginal density of $Z'$. Of course, if $\mu_0=\mu$, then $\mu_1=\mu$ as well. 
Define the chi-square contraction rate as 
\[
\eta_{\chi^2}:=\sup_{\mu_0:\; \chi^2(\mu' \mid \mu) < \infty} \frac{\chi^2(\mu_1 \mid \mu)}{\chi^2(\mu_0 \mid \mu)}.
\]
If $\eta_{\chi^2} < 1$, the Markov chain is contractive in chi-square divergence, once it starts from an initial density that has a finite chi-square divergence with respect to $\mu$. However, it is known that $\eta_{\chi^2}=1-\lambda$ (see, for example, \cite[Theorem 4.3]{Raginsky2014StrongDP}). Hence, if we show a positive spectral gap, it implies an exponential rate of convergence of the Markov chain in the chi-square divergence. In fact, we show that the spectral gap is positive and $\Theta(\eps)$. As a consequence we obtain a chi-square contraction rate of $1-c_0\eps$, as desired. 

The remainder of this proof shows that the Poincar\'e inequality holds with a constant $C\ge c_0 \eps^{-1}$, for some constant $c_0$ independent of $\eps$. This is achieved by a comparison technique for discrete Dirichlet energies. 
\medskip

\noindent\textbf{Step 1.} The first step is to do a change of variables $x \mapsto x^*$. Consider instead the chain $(X^*, Z^*)$. Thus $X^* \rightarrow Z^*$ is a Markov chain with invariant distribution $e^{-F}= \left(\nabla u \right)_{\# \mu}$. We claim that if the chain $(X^*, Z^*)$ satisfies a Poincar\'e inequality, then so does $(X,Z)$. 

To see this, take any $\xi \in \mathbf{L}^2(e^{-V})$. Then $\xi \circ \nabla u^* \in \mathbf{L}^2(e^{-F})$. In fact the variance of $\xi$ under $e^{-V}$ is equal to the variance of $\xi \circ \nabla u^*$ under $e^{-F}$. If $(X^*, Z^*)$ satisfy Poincar\'e inequality with a constant $c$ then 
\[
\var(\xi \circ \nabla u^*) \le \frac{c}{2}\E\left( \xi \circ \nabla u^* (Z^*) -  \xi \circ \nabla u^* (X^*) \right)^2=\frac{c}{2}\E\left( \xi(Z) -  \xi(X) \right)^2. 
\]
This proves the Poincar\'e inequality for $(X,Z)$ with the same constant.
\medskip

\noindent\textbf{Step 2.} Next, let us derive the joint density of the random variables $(X^*, Y, Z^*)$ where $(X,Y,Z)$ is the triplet defined below Definition \ref{defn:mcdef}. 

The joint density $\eta_\eps(x,y,z)$ of $(X,Y, Z)$ is given by the product of the densities $\pi_\eps(x,y)\hat{q}_\eps(z\mid y)$. That is, if $G(x)=\frac{1}{2} \log \det g(x)$, as before, 
\[
\begin{split}
\eta_\eps(x,y,z)&= \frac{1}{(2\pi \eps)^{d/2}} \exp\left[ -V(x) - G(x) - \frac{1}{2\eps} (y-x^*)^T \frac{\partial x}{\partial x^*}(y-x^*)\right] \\
&\times \frac{1}{C_\eps(y)}\exp\left[-V(z)  - G(z) - \frac{1}{2\eps} (y-z^*)^T \frac{\partial z}{\partial z^*}(y-z^*) \right].
\end{split}
\]

For simplicity let $A=X^*, B=Z^*$ and let $\eta^*_\eps$ denote the joint density of $(A,Y, B)$. Then, by the change of variable formula \eqref{eq:jacobian}, if $x^*=a$ and $z^*=b$,
\[
\begin{split}
\eta^*_\eps(a,y,b)&= \frac{1}{(2\pi \eps)^{d/2}} \exp\left[ -F(a) - G(a_*) - \frac{1}{2\eps} (y-a)^T \frac{\partial a_*}{\partial a}(y-a)\right] \\
&\times \frac{1}{C_\eps(y)}\exp\left[-F(b)  - G(b_*) - \frac{1}{2\eps} (y-b)^T \frac{\partial b_*}{\partial b}(y-b) \right].
\end{split}
\]

Now, by our assumption, for some $c_1, \sigma_0 > 0$, $\nabla^2 u \ge \sigma^2_0 I$ and $\sup  \det g \le  c_1$. Hence,
\[
\eta^*_\eps(a,y,b)\ge \frac{C_\eps(y)^{-1}}{(2c_1\pi \eps)^{d/2}} \exp\left[ -F(a)  - \frac{1}{2\sigma_0^2\eps} \norm{y-a}^2 -F(b) - \frac{1}{2\sigma_0^2 \eps} \norm{y-b}^2 \right].
\]
By Lemma \ref{lem:gennormconstant}, there exists a constant $c_3>0$, such that for all $\eps\in (0,1]$, with $\phi= V+ G$, $x=y_*$ (i.e., $y= x^*$), and $H=\frac{\partial y}{\partial y_*}= \frac{\partial x^*}{\partial x}$,
\[
\eps^{-d/2} C_\eps(y) \le \frac{1}{c_3} \frac{e^{- V(x) - G(x)}}{\left[\det  \frac{\partial x^*}{\partial x}\right]^{1/2}}= \frac{1}{c_3} e^{-F(y)}.
\]
Thus, for some constant $c_4 >0$,
\[
\eta^*_\eps(a,y,b)\ge \frac{c_4}{(2\pi \eps)^{d}} \exp\left[ F(y) -F(a)  - \frac{1}{2\sigma_0^2\eps} \norm{y-a}^2 -F(b) - \frac{1}{2\sigma_0^2 \eps} \norm{y-b}^2 \right].
\]
\medskip

\noindent\textbf{Step 3.} Now comes the main comparison. Let $\tilde{\pi}_\eps(a,y)$ denote the joint density 
\begin{equation}\label{eq:gaussianpi}
\tilde{\pi}_\eps(a,y) =  \frac{1}{(2\pi \sigma_0^2 \eps)^{d/2}} \exp\left( -F(a)  - \frac{1}{2\sigma_0^2\eps} \norm{y-a}^2 \right).
\end{equation}
That is, sample $A$ from density $e^{-F}$ and $Y$, given $A=a$, is just Gaussian with mean $a$ and covariance $\sigma_0^2 I$. Let $e^{-\tilde{F}}$ denote the density of the $Y$ coordinate under $\tilde{\pi}_\eps$. 

Suppose now sample $(A, Y, B)$ according to the two-step Gibbs sampler run according to the joint distribution $\tilde{\pi}$. That is, $Y$, given $A$, and $B$, given $Y$, are samples from the two conditional densities of $\tilde{\pi}_\eps$. Then their joint density $\tilde{\eta}_\eps$ is given by 
\[
\tilde{\eta}_\eps(a,y,b)= \frac{1}{(2\pi \sigma_0^2 \eps)^{d}} \exp\left[ \tilde{F}(y) -F(a)  - \frac{1}{2\sigma_0^2\eps} \norm{y-a}^2 -F(b) - \frac{1}{2\sigma_0^2 \eps} \norm{y-b}^2 \right].
\]
Hence, for some constant $c_5 >0$, 
\[
\eta^*_\eps(a,y,b) \ge c_5 e^{F(y) - \tilde{F}(y)}\tilde{\eta}_\eps(a,y,b).
\]
We now show that $e^{F(y) - \tilde{F}(y)}$ is bounded below by a positive constant. To see this, note that, by definition 
\[
e^{-\tilde{F}(y)}= \E\left[ e^{-F(y - \sigma_0 \sqrt{\eps} Z)}\right], \quad Z \sim N(0, I).
\]
Therefore, 
\[
\begin{split}
e^{F(y) - \tilde{F}(y)}&= \E\left[ e^{ F(y) -F(y - \sigma_0 \sqrt{\eps} Z)}\right].
\end{split}
\]
By our assumption, the function $F$ is $L$-smooth. Hence, 
\[
F(y - \sigma_0 \sqrt{\eps} Z) \le F(y) - \sigma_0 \sqrt{\eps} Z \cdot \nabla F(y) + \frac{L}{2} \sigma_0^2 \eps \norm{Z}^2. 
\]
By the above and by Jensen's inequality 
\[
\begin{split}
e^{F(y) - \tilde{F}(y)}&\ge \E\left[ e^{ F(y) -F(y - \sigma_0 \sqrt{\eps} Z)}\right]\\
&\ge \E \left[ e^{\sigma_0 \sqrt{\eps} Z \cdot \nabla F(y) - \frac{L}{2} \sigma_0^2 \eps \norm{Z}^2}\right] \ge e^{-\frac{L}{2} \sigma_0^2 \eps d}\ge e^{-\frac{L}{2} \sigma_0^2 d},
\end{split}
\]
for all $\eps \in (0,1]$. All combined, for some positive constant $C_0$, $\eta^*_\eps(a,y,b) \ge C_0 \tilde{\eta}_\eps(a,y,b)$. By integrating out $y$ from both sides we get $\gamma^*_\eps(a,b) \ge C_0 \tilde{\gamma}_\eps(a,b)$, where $\gamma^*_\eps$ and $\tilde{\gamma}_\eps$ are the joint densities of $(A,B)$ under $\eta^*_\eps$ and $\tilde{\eta}_\eps$, respectively. Then, for any $\xi \in \mathbf{L}^2(e^{-F})$, 
\[
\E_{\gamma^*_\eps} \left( \xi(A) - \xi(B)\right)^2 \ge C_0 \E_{\tilde{\gamma}_\eps} \left( \xi(A) - \xi(B)\right)^2.
\]
If we now show that $\frac{c}{2}\E_{\tilde{\gamma}_\eps} \left( \xi(A) - \xi(B)\right)^2 \ge \Var(\xi)$, then the Poincar\'e gets transferred to $\gamma^*_\eps$ as well with the Poincar\'e constant given by $c/C_0$. 
\medskip

\noindent\textbf{Step 4.} All now remains to show is that $\tilde{\gamma}_\eps$ satisfies a Poincar\'e inequality with a constant that is $\Theta(1/\eps)$. This, however, follows from existing work \cite{KlartagOrdentlich}. 
Let us explain how by bringing in the Hirschfeld-Gebelein-R\'{e}nyi maximal correlation \cite[eqn. (8), (9)]{KlartagOrdentlich}. Given a pair of random variables $(X,Y)$ with a joint density $\eta$, the maximal correlation is defined as  
\[
S(\eta)= \sup_{f,g}\frac{ \E f(X) g(Y) - \E f(X) \E g(Y)}{\sqrt{\Var(f(X)) \Var(g(Y))}}
\]
It can be show that $S^2=\eta_{\chi^2}$, the chi-square contraction coefficient for the induced Markov chain whose transition density is given by the conditional density of $Y$, given $X$. This is also true, by symmetry of $S$, for the induced Markov chain whose transition density is given by the conditional density of $X$, given $Y$. See a derivation in the proof of Theorem 4.3 in \cite{Raginsky2014StrongDP}.

We claim that it suffices to show that that the maximal correlation $S^2$ or the $\eta_{\chi^2}$ corresponding to the joint density $\tilde{\pi}_\eps$ from \eqref{eq:gaussianpi} is strictly less than one. This is because $\tilde{\gamma}_\eps$ is simply two steps of the Gibbs sampler chain run according to the joint density $\tilde{\pi}_\eps$. Thus, the corresponding $\eta_{\chi^2}$ is the square of the chi-square contraction coefficient corresponding to the $\tilde{\pi}_\eps$. Basically that if one step of the Gibbs sampler (i.e., from $A\rightarrow Y$) is a contraction in $\chi^2$ divergence then so is the two-step (i.e. from $A \rightarrow Y\rightarrow B$) with the corresponding contraction coefficient being the square of the one-step coefficient.  

However, Theorem 1.1 in \cite{KlartagOrdentlich} computes the maximal correlation coefficient for the joitn distribution of the pair $(X, X + \sqrt{s} Z)$, where $X \sim \nu$ and $Z$ is standard multivariate normal and $s >0$ is a parameter. The joint distribution of $(X, X+ \sqrt{s} Z)$ is precisely $\tilde{\pi}_\eps$ when $s=\sigma^2_0 \eps$. Since we have assumed that $\nu=e^{-F}$ satisfies a Poincar\'e inequality with constant $c_F$, \cite[Theorem 1.1]{KlartagOrdentlich} applies to $\tilde{\pi}_\eps$ and gives an upper bound on the maximal correlation $S$ as $S^2\le (1 + \sigma_0^2 \eps / c_F)^{-1}$. Thus, by squaring, one gets that the chi-square contraction rate for the one step Markov chain $A \rightarrow B$ under $\tilde{\gamma}_\eps$ is bounded above by 
\[
\eta_{\chi^2} \le (1 + \sigma_0^2 \eps / c_F)^{-2}= 1 + \Theta(\eps). 
\]
Since the spectral gap is $1- \eta_{\chi^2}$, we get that the spectral gap under $\tilde{\gamma}_\eps$ is $\Theta(\eps)$. By our previous steps, original Markov chain, therefore, has a spectral gap of $\Theta(\eps)$ and a chi-square contraction rate of $(1- \Theta(\eps))$. 
This completes the proof of the theorem. 
\end{proof}

\section{Appendix}

\subsection{The special case of one dimension}

One particular case when we can say a lot more about the $\CD(\lambda, \infty)$ condition is in the case of dimension $d=1$. Here, $g(x) = u''(x)$, for some strictly convex potential $u$ and $\Ric_g = 0$ everywhere. Hence, the CD condition \eqref{eq:curvature-dimension} gets simplified to $\Hess_g(V)\succcurlyeq \lambda u''(x)$. This can be verified with the help of an interesting connection with Schwarzian derivatives \cite[Chapter 10]{hille1997ordinary}.

To wit, the only Christoffel symbol in one dimension is $\Gamma_{11}^1(x) = \frac{1}{2} \frac{g'(x)}{g(x)} = \frac{1}{2} \frac{u^{(3)}(x)}{u''(x)} $. For $f \in \mathcal{C}^2(\R)$, the Hessian (in affine coordinates) is 
\[
(\Hess_g f)(x) = f''(x) - \Gamma_{11}^1(x) f'(x). 
\]
Also, $G(x) = \frac{1}{2} \log(u''(x))$, so $G'(x) = \frac{1}{2} \frac{u^{(3)}(x)}{u''(x)} = \Gamma_{11}^1(x)$.
So,
\begin{align*}
    (\Hess_g G)(x) &= \frac{1}{2} \left[ \left( \frac{u^{(3)}(x)}{u''(x)} \right)' - \frac{1}{2} \left(\frac{u^{(3)}(x)}{u''(x)} \right)^2 \right] = \frac{1}{2} \left( Su' \right)(x),
\end{align*}
where $Sf$ is the Schwarzian derivative of $f$ defined as
\[
Sf(z)=\left(\frac{f''(z)}{f'(z)} \right)' - \frac{1}{2}\left( \frac{f''(z)}{f'(z)} \right)^2.
\]
Although $z$ is typically taken to be a complex variable, we will restrict ourselves to a real argument.

Our main result is the following.

\begin{theorem}\label{thm:negsign}
    Assume that V is increasing if and only if $u''$ is decreasing. Moreover, 
    \begin{equation}\label{eq:convexity1d}
    V''(x) \ge \lambda u''(x) - \frac{1}{2} (Su')(x), \quad \forall\; x \in \rr. 
    \end{equation}
    Then $(\mani, g, e^{-V})$ satisfies $\CD(\lambda, \infty)$ condition. 
\end{theorem}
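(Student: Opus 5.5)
The plan is to verify the curvature-dimension inequality \eqref{eq:curvature-dimension} directly in the single affine coordinate. In dimension one the Ricci tensor vanishes identically, since the bracket in \eqref{eq:ricci} is $u_{111}^2-u_{111}^2=0$. Recall from \eqref{eq:density-manifold} that $\mu=e^{-V}dx$ has density $e^{-\phi}$ with respect to $\vol$, where $\phi=V+G$. So $\CD(\lambda,\infty)$ is equivalent to the scalar inequality
\[
\Hess_g(V)(x)+\Hess_g(G)(x)\ge \lambda\, u''(x), \qquad \forall\, x\in\rr.
\]

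\textbf{Step 1: expand the two Hessian terms.} Using the formula $\Hess_g f = f''-\Gamma^1_{11}f'$ with $\Gamma^1_{11}=\frac12 u^{(3)}/u''$, I get $\Hess_g(V)=V''-\frac12\frac{u^{(3)}}{u''}V'$. By the Schwarzian computation preceding the theorem, $\Hess_g(G)=\frac12 (Su')$. The required inequality therefore becomes
\[
V''(x)+\tfrac12 (Su')(x)-\tfrac12\,\frac{u^{(3)}(x)}{u''(x)}\,V'(x)\ge \lambda u''(x).
\]

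\textbf{Step 2: show the cross term is nonnegative, using the monotonicity hypothesis.} I read ``$V$ is increasing if and only if $u''$ is decreasing'' pointwise: at each $x$, $V'(x)\ge 0$ exactly when $u^{(3)}(x)\le 0$. This means $V'(x)$ and $u^{(3)}(x)$ never have the same strict sign, so $V'(x)\,u^{(3)}(x)\le 0$ everywhere. Since $u''>0$ by strict convexity, it follows that $-\frac12\frac{u^{(3)}}{u''}V'\ge 0$. If either $V'(x)=0$ or $u^{(3)}(x)=0$, the term simply vanishes, so no separate case is needed.

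\textbf{Step 3: combine with the hypothesis.} Dropping the nonnegative cross term, it suffices that $V''+\frac12 Su'\ge\lambda u''$, which is exactly hypothesis \eqref{eq:convexity1d}. This gives $\Ric_g+\Hess_g\phi\ge\lambda g$ pointwise, i.e. $\CD(\lambda,\infty)$.

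The main obstacle is Step 2, specifically making precise the sign assumption. If it were read globally, say $V$ monotone on a half-line while $u''$ is monotone on the same half-line, I would instead split $\rr$ into the intervals where $V'$ keeps a constant sign and check the sign of $u^{(3)}$ on each. Either way, what the argument needs is exactly the pointwise product condition $V'u^{(3)}\le 0$. Everything else is the routine bookkeeping of Steps 1 and 3.
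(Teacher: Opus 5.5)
Your proposal is correct and follows the paper's proof essentially verbatim. Like the paper, you use $\Ric_g=0$ in one dimension to reduce $\CD(\lambda,\infty)$ to $V''-\frac12\frac{u^{(3)}}{u''}V'+\frac12 Su'\ge\lambda u''$, read the monotonicity hypothesis as the pointwise condition $(\log u'')'\,V'\le 0$, and drop the resulting nonnegative cross term to reach \eqref{eq:convexity1d}.
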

   
\begin{proof}
The condition for $\CD(\lambda, \infty)$ is 
\begin{align}
     & V''(x) - \frac{1}{2} \frac{u^{(3)}(x)}{u''(x)} V'(x) + \frac{1}{2} \left( Su' \right)(x) \geq \lambda  u''(x) \\
    &\iff V''(x) -\frac{1}{2} (\log u''(x))'V'(x) + \frac{1}{2} \left( Su' \right)(x) \geq \lambda u''(x).\label{eq:convex1d}
\end{align}
Under our assumption  $(\log u''(x))' V'(x) \le 0$.
Hence, \eqref{eq:convexity1d} is a sufficient condition for \eqref{eq:convex1d}. This completes the proof. 
\end{proof}

It is not immediate if Theorem \ref{thm:negsign} provides any substantial simplification. We now show through various examples how to use it. 

 The following lemma is a slight adaptation of a well-known result \cite[Theorem 10.1.1]{hille1997ordinary}. We skip the proof. 

\begin{lemma}\label{lem:ODEschwarz}
Let $y_1, y_2$ denote the two linearly independent solutions of the ODE 
\[
y''(x) + q(x) y(x) = 0.
\]
Then if let $u'(x):= \frac{y_1(x)}{y_2(x)}$, then $S(u')=2q$. Moreover, there is always some way of indexing $y_1, y_2$ such that $u$ is convex. 
\end{lemma}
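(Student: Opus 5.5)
The plan is a direct computation built on the Wronskian, which handles both claims at once. Set $f:=u'=y_1/y_2$ and $W:=y_1'y_2-y_1y_2'$. Differentiating and using $y_i''=-q\,y_i$ gives $W'=y_1''y_2-y_1y_2''=-q y_1y_2+q y_1 y_2=0$. So $W$ is a constant. It is nonzero because $y_1,y_2$ are linearly independent solutions of a second order linear ODE.

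First I compute the derivatives of $f$ wherever $y_2\neq 0$. The quotient rule gives
\[
f'=\frac{y_1'y_2-y_1y_2'}{y_2^2}=\frac{W}{y_2^2},
\]
so $\log\abs{f'}=\log\abs{W}-2\log\abs{y_2}$. Hence
\[
\frac{f''}{f'}=-2\frac{y_2'}{y_2},\qquad
\left(\frac{f''}{f'}\right)'=-2\frac{y_2''}{y_2}+2\left(\frac{y_2'}{y_2}\right)^2=2q+2\left(\frac{y_2'}{y_2}\right)^2 .
\]
In the last step I used $y_2''=-q y_2$. Subtracting $\frac12 (f''/f')^2=2(y_2'/y_2)^2$, the squared terms cancel exactly, and we get $Sf=2q$, that is $S(u')=2q$.

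For the convexity claim, note that $u''=f'=W/y_2^2$ has the constant sign of $W$ on any interval where $y_2\neq0$. Interchanging the roles of $y_1$ and $y_2$ replaces $W$ by $-W$ (replacing $y_1$ by $-y_1$ has the same effect). So we can always index the solutions so that $W>0$. Then $u''>0$, and $u$, an antiderivative of $y_1/y_2$, is strictly convex. The Schwarzian is invariant under this relabeling, since $S(1/f)=Sf$ and $S(-f)=Sf$. Therefore $S(u')=2q$ still holds after the choice.

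The only delicate point is the domain. The formula $u'=y_1/y_2$ needs $y_2$ to have no zeros on the interval where $u$ is to be defined. Otherwise $u'$ blows up there, and convexity of $u$ can only be asserted on each interval between consecutive zeros of $y_2$. I would state the lemma on such an interval, or add the hypothesis that some solution is zero-free on $\rr$, for instance when the ODE is disconjugate, as happens when $q\le 0$. With that caveat the argument above is complete. The main obstacle is simply this global nonvanishing issue, not the algebra.
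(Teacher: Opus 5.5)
Your argument is correct. It is the standard Wronskian computation behind the result the paper cites in place of a proof (Hille, Theorem 10.1.1), and choosing the indexing so that the constant Wronskian $W=y_1'y_2-y_1y_2'$ is positive is exactly what the paper does in its examples (e.g.\ $W=1$ for $q=1/(4x^2)$). Your caveat that $y_2$ must be zero-free on the domain of $u$ is a real precision the stated lemma omits. The paper's examples respect it implicitly by working on $\rr$ with $y_2=e^{-x}$ and on $(0,1)$ with $y_2=\sqrt{x}$.
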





For the rest of this section we assume that $u'=\frac{y_1}{y_2}$ as in Lemma \ref{lem:ODEschwarz} for some convex $u$ and some suitable choice of $q$.


\begin{example} Take $q$ to be the constant function $-1$. Then $y_1=e^x$ and $y_2=e^{-x}$ are linearly independent solutions of $y''=y$. Hence, by defining $u$ via its derivative $u'=y_1/y_2=e^{2x}$, one gets $S(u')=-2$. Clearly, $u(x)=\frac{1}{2}e^{2x}$ is valid convex solution. 

Theorem \ref{thm:negsign} assumes that $V$ must be decreasing on $(0,\infty)$ and increasing on $(-\infty, 0)$. The condition \eqref{eq:convexity1d} becomes $V''(x) \ge 2\lambda e^{2x}  + \frac{1}{2}$. Consider the case of $\lambda=0$, where we simply demand $V''(x) \ge \frac{1}{2}$. Clearly, there is no such $V$ whose domain is the entire $\rr$. However, there are examples on compact intervals, such as $V(x)=(\abs{x}-1)^2$ on the interval $[-1,1]$. This functions is symmetric, decreasing on $(0,1]$, hence increasing on $[-1,0)$ and, on $[-1,1]\backslash\{0\}$, $V''(x)=2\ge 1/2$.
\end{example}



\begin{example}
For an example of the reverse phenomenon, consider $q(x)= 1/(4x^2)$. Two linearly independent  solutions to the Cauchy-Euler ODE
\[
4x^2 y'' + y =0, \quad x > 0, 
\]
are $y_1=\sqrt{x} \log x$ and $y_2=\sqrt{x}$. We indexed $y_1, y_2$ such a way that the Wronskian $W=1 >0$. Thus $u'(x)=\log x$ gives us the convex function $u(x)=x\log x - x$ for which $u''(x)=1/x$, and $S(u')=2q(x)=1/(2x^2)$. Restrict ourselves to the domain $(0, 1)$. 

The assumption in Theorem \ref{thm:negsign} can now be written as $V$ must be increasing on $(0, 1)$ and that
$V''(x) \ge \frac{\lambda}{x} - \frac{1}{x^2}$.
Note that, unlike the previous example, the RHS above does not have to be always positive which allows for concave choices of $V$. For example, take $\lambda=0$. Then we want $V''(x) \ge -\frac{1}{x^2}$. Let $V(x)=x^\alpha$ for some $\alpha \in (0,1)$, which makes $V$ a concave function of $x$ while being increasing in $(0,1)$. For this choice,
\[
V''(x)= -\frac{\alpha(1-\alpha)}{x^{2-\alpha}}\ge -\frac{1}{x^{2-\alpha}} \ge -\frac{1}{x^2},
\]
since $x^{2-\alpha} \ge x^2$,  and $\alpha(1-\alpha) \le 1$, for $x\in (0,1)$ and any $\alpha \in (0,1)$. 

Thus, for $\alpha \in (0, 1)$ we get a concave function $V(x)=x^\alpha$ for which Theorem \ref{thm:negsign} holds true. 
\end{example}

In all these examples, the domain is a proper subset of $\rr$. It is possible to define MLD in one dimension restricted to a subset of $\rr$. In that case the CD condition implies exponential convergence in Kullback-Leibler. However, we do not take up this construction here.



\subsection{Remaining proofs}

\begin{proof}[Proof of Theorem \ref{prop:LSI}] Use the final bullet point in \cite[Proposition 4.1]{CG16} for $F(u)=\log u$. Since we have already shown that $\cdc_D$ (and thus $\cdc$) satisfies a Poincar\'e inequality, LSI is implied by a defective LSI (\cite[(HFS4defect)]{CG16}, for the choice of $F(u)=\log u$). By the first bullet point in  \cite[Proposition 4.1]{CG16}, in order to prove a defective LSI, it suffices to prove a super Poincar\'e inequality (SPI). The proof below follows closely the argument in \cite[Proposition 3.5]{CG16}, except for a change of variable.

As in the proof of Theorem \ref{prop:PI}, from \eqref{eq:duallyapunov}, the dual MLD admits a Lyapunov function $\xi_\alpha^* \ge 1$ such that 
\begin{equation}\label{eq:globlya}
-\frac{1}{\xi_\alpha^*(y)}\genmld \xi_\alpha^*(y) \ge  \alpha (\varphi^*_\alpha(y_*) - d)\ge \alpha \delta V(y_*)-b_1,  
\end{equation}
for some $b_1 >0$. 
Here, as usual $y_*= \nabla u^*(y)$ is the primal coordinate corresponding to $y$, and the final inequality is due to assumption (c).

For every $\lambda \ge 0$, let $A_\lambda:=\{y\in \rr^d:\; \alpha \delta V(y_*) \le b_1 + \lambda\}$. 
By our assumptions, $A_0$ is compact. Let 
$b_0:=\sup_{A_0} \left( - \alpha \delta V(y_*) + b_1 \right)$.

Recall that $\nu=e^{-F}$ is the invariant distribution for the dual MLD. Let $M:=\sup_y(-F(y)) < \infty$. Let $\gamma$ be a smooth test function.  
Then, for any $\lambda >0$, 
\[
\begin{split}
\int \gamma^2(y) d\nu(y) &= \int_{A_\lambda} \gamma^2(y) e^{-F(y)}dy + \int_{A^c_\lambda} \gamma^2(y) d\nu\\
&\le e^{M} \int_{A_\lambda} \gamma^2(y)dy + \frac{1}{\lambda}\int_{A^c_\lambda} (\alpha \delta V(y_*) - b_1) \gamma^2(y) d\nu.
\end{split}
\]
By construction $\int_{A^c_\lambda\cap A_0} (\alpha \delta V(y_*) - b_1)\gamma^2(y) d\nu \ge 0$. Thus the second integral on the RHS may be bounded above by
\[
\begin{split}   
& \frac{1}{\lambda}\int_{\rr^d} (\alpha \delta V(y_*) - b_1) \gamma^2(y) d\nu- \frac{1}{\lambda}\int_{A_0} (\alpha \delta V(y_*) - b_1) \gamma^2(y) d\nu \\
&\le \frac{1}{\lambda}\int_{\rr^d} (\alpha \delta V(y_*) - b_1) \gamma^2(y) d\nu+ \frac{e^M b_0}{\lambda}\int_{A_0} \gamma^2(y) dy\\
&\le \frac{1}{\lambda}\int_{\rr^d} (\alpha \delta V(y_*) - b_1)\gamma^2(y) d\nu+ \frac{e^M b_0}{\lambda}\int_{A_\lambda} \gamma^2(y) dy.
\end{split}
\]
Thus, combining this bound with the one above gives
\begin{equation}\label{eq:lsiproof}
\begin{split}
    \int \gamma^2 d\nu &\le e^{M}\left( 1 + \frac{b_0}{\lambda} \right) \int_{A_\lambda} \gamma^2(y)dy + \frac{1}{\lambda}\int_{\rr^d} (\alpha \delta V(y_*) - b_1) \gamma^2(y) d\nu.
\end{split}
\end{equation}
Now we bound the two integrals on the RHS separately. For the second integral, by \eqref{eq:globlya}, 
\[
\begin{split}
  \frac{1}{\lambda}  \int (\alpha \delta V(y_*) - b_1) \gamma^2(y) d\nu &\le  \frac{1}{\lambda}  \int -\frac{1}{\xi_\alpha^*(y)}\genmld \xi_\alpha^*(y) \gamma^2(y) d\nu = \frac{1}{\lambda} \cdc_D(\gamma).
\end{split}
\]
The last equality is due to integration-by-parts. See, for example, a very similar calculation done in the first display on \cite[page 64]{BPCG08}. 

Continue to follow the proof of \cite[Proposition 3.5]{CG16} and bound the first integral on the RHS of \eqref{eq:lsiproof} by a local super Poincar\'e inequality. For any $s>0$, 
\[
\int_{A_\lambda} \gamma^2(y)dy\le s \int_{A_\lambda} \norm{\nabla \gamma}^2dy + \beta(s) \left(\int_{A_\lambda} \abs{\gamma} dy\right)^2, 
\]
where $\beta(s)=C_2(s^{-d/2} +1)$, for some absolute constant $C_2$. This follows from \cite[Proposition 3.1]{CGWW}. 





Thus combining these bounds, we get that for all $s>0$, $\int \gamma^2 d\nu\le $
\begin{equation}\label{eq:lsiproof2}
  e^{M}\left( 1 + \frac{b_0}{\lambda} \right) \left(s \int_{A_\lambda} \norm{\nabla \gamma}^2dy + \beta(s) \left(\int_{A_\lambda} \abs{\gamma} dy\right)^2\right) + \frac{1}{\lambda} \cdc_D(\gamma).
\end{equation}
By our assumption $(\nabla^2 u^*)^{-1}$ has a global upper bound. Thus, for suitable positive constants, $\int_{A^c_\lambda} \norm{\nabla \gamma}^2dy \le C' \cdc_D(\gamma)$. Hence, 
\[
\int \gamma^2 d\nu\le \left( C'e^{M}\left( 1 + \frac{b_0}{\lambda} \right) s + \frac{1}{\lambda} \right) \cdc_D(\gamma) + \beta(s) \left(\int_{A_\lambda} \abs{\gamma} dy\right)^2.
\]

Now, consider a change of variables from $y \mapsto x=y_*$. The pre-image of the set $A_\lambda$ is the set $B_\lambda:=\{x: \alpha \delta V(x) \le b_1 + \lambda\}$. 
Thus, for some suitable positive constants $C_i$, $i=1,2,3$ and $c'$, 
\[
\begin{split}
&\left(\int_{A_\lambda} \abs{\gamma} dy\right)^2= \left( \int_{B_\lambda} \abs{\gamma}(x^*) \det \nabla^2 u(x) dx \right)^2= C_1 \left( \int_{B_\lambda} \abs{\gamma}(x^*)  dx \right)^2\\
&\le C_3 e^{c'\lambda} \left( \int_{\rr^d} \abs{\gamma}(x^*) e^{-V(x)} dx \right)^2=C_3 e^{c'\lambda} \left( \int_{\rr^d} \abs{\gamma}(y) e^{-F(y)} dy \right)^2. 
\end{split}
\]

Hence, 
\[
\int \gamma^2 d\nu\le \left( C'e^{M}\left( 1 + \frac{b_0}{\lambda} \right) s + \frac{1}{\lambda} \right) \cdc_D(\gamma) + C_3 s^{-d/2} e^{c'\lambda} \left( \int_{\rr^d} \abs{\gamma} d\nu \right)^2.
\]
Now, pick $\lambda=c/s$ to get the following super Poincar\'e inequality for small $s$. For some suitable constants $c, C', c'>0$, to get that for all small enough $s>0$, 
\[
\int \gamma^2 d\nu\le c s \cdc_D(\gamma) + C' e^{c'/s} \left( \int_{\rr^d} \abs{\gamma} d\nu \right)^2.
\]
The rest of the argument goes through exactly as the argument in the proof of \cite[Proposition 3.5]{CG16} from eqn (3.6) and below. This completes the proof.
\end{proof}

\bibliographystyle{abbrv}
\bibliography{./biblio.bib}

\end{document}